\documentclass[11pt,letterpaper]{amsart}

\usepackage[T1]{fontenc}
\usepackage{amsmath,amssymb,amsthm}
\usepackage[margin=1in]{geometry}
\usepackage[colorlinks=true, linkcolor=blue, citecolor=blue]{hyperref}
\usepackage{bbm}

\DeclareMathOperator{\pf}{Pf}
\DeclareMathOperator{\crossings}{cr}
\DeclareMathOperator{\wt}{wt}
\DeclareMathOperator{\swt}{swt}

\newcommand{\bbR}{\mathbb{R}}

\newcommand{\bone}{\mathbbm{1}}
\newcommand{\cI}{\mathcal{I}}

\newtheoremstyle{customtheorem}{}{}
{}
{}{\bf}{}{ }{\thmname{#1}\thmnumber{ #2}\textnormal{\thmnote{ (#3)}}}
\theoremstyle{customtheorem}
\newtheorem{theorem}{Theorem}[section]
\newtheorem{prop}[theorem]{Proposition}

\newtheorem{lemma}[theorem]{Lemma}

\newtheorem{cor}{Corollary}[section]

\newtheorem{conj}[theorem]{Conjecture}

\newtheorem{example}[theorem]{Example}
\newtheorem*{rmk}{Remark}

\title[Pfaffian Positivity for Convex Kernels]{Pfaffian Positivity for Convex Kernels and Optimal Matching Bounds}
\author{William Johnson}
\thanks{The author is currently enrolled as a non-degree student at the University of Maryland, College Park. This research was conducted independently.}
\email{wlj1@umd.edu}
\date{October 2026} 
\subjclass[2020]{Primary 15A15; Secondary 05A20, 15B48, 70F10, 05C70, 26A51} 
\keywords{Pfaffian positivity, convex kernels, skew-symmetric matrices, matching bounds, Albouy--Moeckel conjecture, central configurations}

\begin{document}

\begin{abstract}
We show that for all positive, nonincreasing, convex functions $f : (0, \infty) \to (0,\infty)$ and for any collection of $n$ points $x_i \in \mathbb{R}$ with $x_1 < x_2 < \ldots < x_n$ and $n$ even, the Pfaffian of the skew matrix defined by $f(x_j - x_i)$ above the diagonal is positive.
More generally, we establish coefficientwise nonnegativity for Pfaffians arising from interval decompositions, which also yields quantitative lower bounds for singly and doubly bordered Pfaffians.
Choosing $f(r) = r^{-2}$ proves the three nonvanishing assertions in the conjecture of Albouy and Moeckel.
Further, we show that the rainbow bound (the product over the nested matching $\{1,n\}, \{2,n-1\}, \ldots \{n/2, n/2 + 1\}$) is the unique single-matching product that bounds $\mathrm{Pf}(A_f(x))$ below uniformly over $x$ and $f$ positive, nonincreasing, convex.
\end{abstract}

\maketitle

\section{Introduction}
\label{sec:introduction}

The Pfaffian of a skew-symmetric matrix is an alternating sum over perfect matchings, so its positivity is not generally apparent from the signs of its entries. 
In this paper, we establish positivity for a class of skew-symmetric matrices arising from ordered points and convex kernels. 
More generally, we prove a coefficientwise positivity result for matrices constructed from intervals, providing a combinatorial-algebraic approach to Pfaffian positivity.

For $f:(0,\infty)\to(0,\infty)$ positive, nonincreasing, convex, and $x_1<x_2<\ldots<x_n$ in $\mathbb R$, we consider the skew-symmetric matrix $A_f(x)$ defined by

\[
(A_f(x))_{i,j}=
\begin{cases}
f(x_j-x_i), & i<j,\\
0, & i=j,\\
-f(x_i-x_j), & i>j.
\end{cases}
\]

We prove that $\pf(A_f(x))$ is strictly positive for every even $n$. 
In fact, we establish the quantitative lower bound
\[\pf(A_f(x)) \geq f(x_n-x_1)^{n/2}>0
\]
which we call the \emph{diameter bound}.

Our proof relies on a more general algebraic result. 
We introduce an indeterminate $a_J$ for each subinterval $J$ of $[n]$ with at least two elements and construct a skew-symmetric matrix $A_n(a)$ whose entries are sums of these indeterminates. 
We show that $\pf(A_n(a))$ is a polynomial with nonnegative integer coefficients. 
Convexity and monotonicity of $f$ then imply that $A_f(x)$ is obtained by specializing these indeterminates to nonnegative values. 
This separates the combinatorial structure of the Pfaffian from the properties of the kernel.

We further strengthen the diameter bound using the \emph{rainbow matching}, which pairs $1$ with $n$, $2$ with $n-1$, and so on.

\begin{theorem}[Rainbow Bound]
  \label{thm:rainbow-bound}
  Let $n\geq2$ be even, and let $A$ be a real skew-symmetric $n\times n$ matrix with nonnegative interval coefficients (defined in \autoref{sec:interval-decomposition}). 
  Then
    \[\pf(A) \geq \prod_{k=1}^{n/2} A_{k,n+1-k} \geq A_{1,n}^{n/2}.\]
  In particular, if $f:(0,\infty)\to(0,\infty)$ is positive, nonincreasing, and convex, then
    \[\pf(A_f(x))\geq\prod_{k=1}^{n/2}f(x_{n+1-k}-x_k).\]
\end{theorem}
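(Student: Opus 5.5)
The plan is to peel off the outermost pair $\{1,n\}$ and induct on $n$. Two ingredients are used: the coefficientwise nonnegativity of $\pf(A_n(a))$ established earlier, and the fact that the Pfaffian is affine in each single entry. I am assuming the setup of \autoref{sec:interval-decomposition}: an entry of $A$ is $A_{i,j}=\sum_{J\supseteq[i,j]} a_J$ for $i<j$, with all $a_J\ge 0$, where $J$ ranges over subintervals of $[n]$ with at least two elements. Since both sides of the inequality are polynomials in the $a_J$, it suffices to prove the statement at the level of coefficients, i.e.\ for the generic matrix $A_n(a)$, and then specialize.

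\emph{Step 1 (splitting off the entry $(1,n)$).} Every perfect matching uses the pair $\{1,n\}$ at most once, so, treating $A_{1,n}$ as an independent symbol,
\[\pf(A)=A_{1,n}\,\pf(A_{[2,n-1]})+R,\]
where $A_{[2,n-1]}$ is the principal submatrix on rows and columns $2,\dots,n-1$. Here $R$ is the Pfaffian of $A$ with the $(1,n)$ and $(n,1)$ entries replaced by $0$. The sign of the first term is $+$, because $\{1,n\}$ crosses no other pair. The submatrix $A_{[2,n-1]}$ again has nonnegative interval coefficients: restrict each interval $J$ to $J\cap[2,n-1]$ and merge the coefficients accordingly. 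So induction will give $\pf(A_{[2,n-1]})\ge\prod_{k=2}^{n/2}A_{k,n+1-k}$, with base case $n=2$ trivial.

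\emph{Step 2 (the main obstacle: $R\ge 0$).} It remains to show that $R$, viewed as a polynomial in the $a_J$ after substituting $A_{i,j}=\sum_{J\supseteq[i,j]}a_J$ for all other entries, has nonnegative coefficients. Equivalently, $\pf(A_n(a))-A_{1,n}\pf(A_{[2,n-1]})$ is coefficientwise nonnegative.

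I would first try to rerun the argument that proved nonnegativity of $\pf(A_n(a))$. Presumably that argument is a sign-reversing involution on pairs (matching, choice of an interval $J_e\supseteq e$ for each edge $e$), whose fixed points carry sign $+$. The task is to check that this involution can be arranged so that configurations in which the edge $\{1,n\}$ appears, necessarily with $J=[1,n]$, form a union of orbits. If so, discarding them leaves a set on which the involution still acts, and its fixed points still contribute positively.

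If that fails, a fallback is to work directly with $R$. Since $A_{1,n}=a_{[1,n]}$ is the only entry involving no interval other than $[1,n]$, one can try expanding $R$ along row $1$: $R=\sum_{1<j<n}(-1)^{j}A_{1,j}\pf(A_{\widehat{1}\widehat{j}})$. Then group the terms $A_{1,j}-A_{1,j+1}=\sum_{J\supseteq[1,j],\,\max J=j}a_J\ge0$ in Abel-summation fashion, and reduce to coefficientwise nonnegativity of alternating differences of smaller interval Pfaffians, using the earlier theorem on the smaller matrices.

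\emph{Step 3 (conclusion).} Combining Steps 1 and 2 with the induction hypothesis and $A_{1,n}\ge0$ gives $\pf(A)\ge\prod_{k=1}^{n/2}A_{k,n+1-k}$. For the second inequality, note $[1,n]\supseteq[k,n+1-k]$, so every interval containing $[1,n]$ also contains $[k,n+1-k]$. Hence $A_{k,n+1-k}\ge A_{1,n}\ge 0$ by nonnegativity of the $a_J$, and taking the product over $k$ gives $\prod_k A_{k,n+1-k}\ge A_{1,n}^{n/2}$. Finally, the earlier observation that convexity and monotonicity of $f$ make $A_f(x)$ a nonnegative specialization of $A_n(a)$ yields the bound $\pf(A_f(x))\ge\prod_{k=1}^{n/2}f(x_{n+1-k}-x_k)$.
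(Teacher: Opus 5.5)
Your overall plan is the paper's plan, and Steps 1 and 3 are fine: split off the entry $A_{1,n}$, show the remainder is coefficientwise nonnegative, iterate on the nested intervals $[k,n+1-k]$, and finish with $A_{k,n+1-k}\ge A_{1,n}$. The gap is Step 2. It is the entire content of the paper's Endpoint Inequality, and you do not prove it.

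Your first route rests on a wrong guess about the earlier proof. Coefficient Positivity was not proved by a sign-reversing involution on (matching, interval-choice) pairs. It was proved by induction on $n$: for each monomial support $K$ one finds a gap cut by no interval of $K$ and applies Twin Rows. So there is no ready-made involution to restrict, and deciding whether the $\{1,n\}$-configurations form a union of orbits is the whole problem.

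Your fallback is the right computation. Take $B=A_n(a)([n]\setminus\{1\})$ indexed by $[n-1]$, and $Z_q(B)=\sum_{i=1}^{q-1}(-1)^{i+1}\pf\bigl(B([n-1]\setminus\{i\})\bigr)$. Abel summation of the first-row expansion gives
\[
R=\bigl(a_{[1,n-1]}+a_{[1,n]}\bigr)\,Z_{n-1}(B)+\sum_{q=2}^{n-2}a_{[1,q]}\,Z_q(B).
\]
But nonnegativity of $Z_q(B)$ does not follow from the earlier theorem applied to the smaller matrices. That theorem controls each summand $\pf(B([n-1]\setminus\{i\}))$, not alternating partial sums of them. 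Grouping consecutive terms does not help either. Already for a $5\times 5$ interval matrix,
$\pf\bigl(B(\{1,2,4,5\})\bigr)-\pf\bigl(B(\{1,2,3,5\})\bigr)=B_{1,2}\,a_{[4,5]}+a_{[1,3]}a_{[2,5]}-a_{[1,5]}a_{[2,3]}$
contains a negative monomial.

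The missing idea is the paper's Alternating Prefix Sums lemma. It realizes $Z_q(B)$ as a coefficient of an interval Pfaffian of order $n$, not as a combination of smaller ones.

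1. Adjoin a vertex $0$ in front of $1$, together with one new interval variable $t=a_{[0,q-1]}$. That is, pass to $(0\oplus B)+t\,U_q$, where $U_q$ is the skew matrix with all above-diagonal entries equal to $1$ on the indices $\{0,\dots,q-1\}$ and zero elsewhere.
2. This is again an interval matrix, now of even order $n$, whose interval coefficients are nonnegative combinations of the $a_J$ and $t$. So Coefficient Positivity applies to it.
3. Its row $0$ is $t\,\mathbf{1}_{[1,q-1]}$, so expanding along that row gives $t\sum_{i=1}^{q-1}(-1)^{i+1}$ times $t$-dependent minors.
4. Setting $t=0$ in those minors shows the coefficient of $t$ is exactly $Z_q(B)$.

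Hence each $Z_q(B)$ is coefficientwise nonnegative, and by your display so is $R$. With this lemma in hand, your fallback becomes precisely the paper's proof of the Endpoint Inequality, and Steps 1 and 3 then finish the argument as you wrote them.
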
 

Moreover, this bound is optimal among lower bounds given by the product associated with a single perfect matching.

\begin{theorem}[Optimality of the Rainbow Bound]
  \label{thm:rainbow-bound-optimal}
  Let $n\geq4$ be even, and let

\[
R=\{\{k,n+1-k\}:1\leq k\leq n/2\}
\]

be the rainbow matching. For every perfect matching $M\neq R$, the infimum over positive, nonincreasing, convex $f$  and $x_1 < \ldots < x_n$ vanishes.
\[
\inf_{f, x}
    \frac{\pf(A_f(x))} {\displaystyle\prod_{\substack{\{i,j\}\in M\\i<j}} f(x_j-x_i)} = 0.
    \]
\end{theorem}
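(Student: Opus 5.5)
The plan is to split on whether $M$ is the consecutive matching $N=\{\{1,2\},\{3,4\},\ldots,\{n-1,n\}\}$, which differs from $R$ since $n\ge4$. Matchings $M\neq N$ will be handled by an exponential kernel, where the Pfaffian can be computed exactly. For $M=N$ I will use a kernel that is linear on short scales, so that the Pfaffian is read off from the nonnegative interval polynomial $\pf(A_n(a))$.

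\emph{Step 1 (exponential kernel).} Take $f_\lambda(r)=e^{-\lambda r}$ with $\lambda>0$; it is positive, decreasing and convex. Then $(A_{f_\lambda}(x))_{ij}=u_iv_j$ for $i<j$, with $u_i=e^{\lambda x_i}$ and $v_j=e^{-\lambda x_j}$. For a skew matrix of this rank-type form I will verify by induction on $n$ that
\[
\pf = \prod_{k=1}^{n/2}u_{2k-1}v_{2k}.
\]
The induction expands along the first row; the terms with $j\ge3$ cancel in pairs because rows $1$ and $2$ are proportional beyond column $2$. This gives $\pf(A_{f_\lambda}(x))=e^{-\lambda L(N)}$, where $L(M)=\sum_{\{i<j\}\in M}(x_j-x_i)$. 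Hence
\[
\frac{\pf(A_{f_\lambda}(x))}{\prod_{M} f_\lambda} = e^{-\lambda\,(L(N)-L(M))}.
\]
Any matching $M\neq N$ contains a non-consecutive pair, and a short uncrossing argument shows $L(M)>L(N)$ for strictly increasing $x$. Letting $\lambda\to\infty$ with $x$ fixed, the ratio tends to $0$. This is consistent with \autoref{thm:rainbow-bound}, since $L(R)\ge L(N)$.

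\emph{Step 2 (the matching $N$).} Here I will use the interval decomposition of \autoref{sec:interval-decomposition}: $A_f(x)$ is the specialization of $A_n(a)$ at nonnegative values $a_J$, which for $|J|\ge3$ are essentially second differences of $f$ across the gaps inside $J$. I would take $f(r)=\varepsilon+(c-r)_+$ and place the points in well-separated pairs, so that the within-pair distances are $\delta<c$ and the between-pair distances are at least $c$. One could instead cluster all points inside $[0,c)$, but that makes $f$ affine there and I expect the ratio to tend to $1$, so that choice is discarded. With the paired placement, $\prod_N f\approx c^{n/2}$, and the contributions of the large entries to $\pf$ should be confined to monomials supported on the blocks. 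The task is then to tune $\varepsilon,\delta,c$, possibly giving the pairs slightly different gaps so that the kink of $f$ sits between some of them, so that the leading block monomials cancel.

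\emph{Main obstacle.} Step 2 is where I expect the real difficulty. Coefficientwise nonnegativity means the Pfaffian cannot become small through sign cancellation. The only mechanism is to make the interval variables $a_J$ for adjacent pairs vanish or be small while $f(x_{2k}-x_{2k-1})$ stays large. That forces $f$ to be nearly affine on the scale of the gaps, yet not affine globally, or the ratio tends to $1$. My first attempt would be to compute $n=4$ explicitly, where
\[
\pf = a_{12}a_{34}-a_{13}a_{24}+a_{14}a_{23},
\]
with a two-slope piecewise-linear $f$, and look for a regime where $a_{12}a_{34}$ and $a_{13}a_{24}$ nearly cancel while $a_{14}a_{23}$ is negligible. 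Given such an $n=4$ construction, I would extend it to general even $n$ by placing the remaining points far away so that the matrix is nearly block diagonal and the Pfaffian factors.
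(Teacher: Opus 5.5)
\textbf{Step 1 has a sign error that reverses its conclusion.} Your formula $\pf(A_{f_\lambda}(x))=e^{-\lambda L(N)}$ is correct, but then
\[
\frac{\pf(A_{f_\lambda}(x))}{\prod_M f_\lambda}=\frac{e^{-\lambda L(N)}}{e^{-\lambda L(M)}}=e^{\lambda(L(M)-L(N))}\geq 1 .
\]
Since $L(M)>L(N)$, this tends to $+\infty$ as $\lambda\to\infty$, not to $0$. For the exponential kernel the Pfaffian equals the \emph{largest} single-matching product, because the consecutive matching has minimal total length. So this kernel can never witness optimality; it is log-convex, the setting of the paper's closing conjecture. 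A sanity check exposes the error. Step 1 never uses $M\neq R$, and $R\neq N$ for $n\geq4$, so it would show that the rainbow ratio tends to $0$, contradicting \autoref{thm:rainbow-bound}. Concretely, for $n=4$, $x=(0,1,2,3)$ and $M=\{\{1,3\},\{2,4\}\}$, the ratio is $e^{2\lambda}$. So every case $M\neq N$ is left unproved.

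\textbf{Step 2 is only a search, and the configuration you propose gives ratio exactly $1$.} Take $f=\varepsilon+(c-r)_+$ with within-pair distances below $c$ and between-pair distances at least $c$. Then every entry outside the $2\times2$ diagonal blocks equals $\varepsilon$, so rows $2k-1$ and $2k$ agree off their own columns. \autoref{lem:twin-rows} then gives $\pf(A_f(x))=\prod_k f(x_{2k}-x_{2k-1})$ exactly.

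The regime you discarded is the one that works. If every pairwise distance lies on an affine piece $f(r)=c-r$, \autoref{lem:affine-kernel-pfaffian} gives $\pf=c^{n/2-1}\bigl(c-\sum_k(x_{2k}-x_{2k-1})\bigr)$. This is small once the within-pair gaps nearly exhaust $c$. The paper takes $x=(0,T,T+1,\ldots,T+n-3,2T+n-3)$ and $f_T(r)=1+(D-r)_+$ with $D=x_n-x_1$, which gives $\pf=\tfrac n2(2T+n-2)^{n/2-1}$. Any matching avoiding $\{1,n\}$ has two endpoint edges of weight at least $T+1$ and interior edges of weight at least $2T+1$. Hence the ratio is $O(1/T)$, and this covers your $N$ too, with no case split.

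The hypothesis $M\neq R$ enters by peeling off the common outer rainbow pairs $\{1,n\},\ldots,\{k,n+1-k\}$ of $M$ and $R$. The inner $n-2k\geq4$ points carry the construction above. The outer points are placed at distance at least $2D$, so their entries equal $1$, and \autoref{lem:constant-outer-layer} shows that neither the Pfaffian nor the $M$-product changes.

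Your proposal is missing both the affine-cluster computation and this reduction. The relevant dichotomy is whether $M$ contains the outermost pair, not whether $M$ is the consecutive matching.
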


The inverse problem for collinear central configurations was studied by Moulton \cite{Moulton1910}, who introduced the even-order Pfaffian associated with this problem.
Buchanan \cite{buchanan1909} claimed a proof that Moulton's Pfaffian never vanishes, but Albouy and Moeckel \cite[p.~91]{albouyMoeckel2000} note a gap in the argument.
Albouy and Moeckel revisited the problem without fixing the center of mass in advance. 
This places the even and odd cases on equal footing: in both, a configuration determines a two-parameter family of mass vectors making it central, provided certain Pfaffians are nonzero. 
They conjectured that these Pfaffians never vanish.
They proved nonvanishing for $n\leq4$ under certain assumptions and gave computer-assisted proofs for $n=5,6$ in the Newtonian case. 
Specifically $n\le 4$, Albouy and Moeckel prove nonvanishing under hypotheses on $\varphi(s)=f(\sqrt s)/\sqrt s$: strict monotonicity for $K_3^L$, power laws for $K_4$, and strict convexity for $K_4^{xL}$. 
The first and third are implied by our hypotheses.
They also showed that for all $n$ and almost all configurations, the Pfaffians are nonvanishing for a large class of analytic potentials with sufficiently strong singularities.

In 2014, Xie proved the Newtonian $n=6$ case in \cite{Xie2014An} with an analytic method replacing the computer-assisted method from \cite{albouyMoeckel2000}. 
Xie established an equivalence between nonvanishing Pfaffians for even numbers of bodies and bordered Pfaffians for odd numbers of bodies in the Newtonian case in \cite{xie2022inverse}.
Ferrario obtained further results in \cite{Ferrario2020, Ferrario2020Pfaffians,ferrario2026pfaffians} eventually proving Pfaffian positivity for even $n\leq14$ and positive, decreasing, log-convex kernels.

We extend these results by providing a uniform combinatorial-algebraic proof of positivity for all even $n$ and the corresponding bordered Pfaffians for odd $n$, under the weaker than log-convexity assumptions of positivity, monotonicity, and convexity. 
\autoref{lem:alternating-prefix-sum} also yields quantitative lower bounds for singly and doubly bordered Pfaffians. 
Together with the ordinary Pfaffian bound, these establish the nonvanishing of all three Pfaffians conjectured by Albouy and Moeckel for the Newtonian interaction, and extend the result to positive, nonincreasing, convex kernels.
The nonvanishing of these Pfaffians ensures that, for every noncollision collinear configuration, the space of real mass vectors making it central has dimension two.

\section{Interval Decomposition}
\label{sec:interval-decomposition}
Our proof of Pfaffian positivity separates the combinatorial structure of the Pfaffian from the analytic properties of the convex kernel.
We begin by introducing an indeterminate for each subinterval of $[n]$ and constructing a skew-symmetric matrix whose entries are sums of these indeterminates.
M\"obius inversion shows that every skew-symmetric matrix arises by specializing the indeterminates.
We will then study the Pfaffian as a polynomial in the indeterminates.

Let $\cI_n = \{[p,q] \mid 1 \leq p < q \leq n\}$ be the set of all subintervals of $[n]$ with at least two elements.
For an $n \times n$ matrix $A$, define its interval coefficients by
\[ a_{[p,q]}(A) = A_{p,q} - A_{p-1,q} - A_{p,q+1} + A_{p-1,q+1}, \]
with the convention that $A_{i,j}=0$ whenever either index lies outside $[n]$.
These coefficients arise from M\"obius inversion on the poset $\cI_n$ ordered by inclusion \cite[Section~3.7]{stanley2012enumerative}.
We say $A$ has nonnegative interval coefficients if $a_J(A) \geq 0$ for all $J \in \cI_n$.

Now let $a=(a_J)_{J\in\cI_n}$ be independent commuting indeterminates. Define the skew-symmetric matrix $A_n(a)$ by
\[ (A_n(a))_{ij} =\sum_{\substack{J\in\cI_n\\i,j\in J}}a_J \qquad (i<j), \]
with the remaining entries determined by skew-symmetry.

\begin{lemma}[Interval Decomposition]
  \label{lem:interval-decomposition}
  For $A$ an $n \times n$ matrix and $i < j$
  \[A_{i,j} = \sum_{[i,j] \subseteq [p,q] \in \cI_{n}} a_{[p,q]}(A).\]
  So $a_{[1,n]}(A) = A_{1,n}$; when $A$ is skew-symmetric, $A_n(a(A)) = A$.
\end{lemma}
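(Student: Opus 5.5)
The identity is a telescoping (Möbius inversion) statement, and I plan to verify it directly by summing the defining formula for the interval coefficients. Fix $i<j$. The intervals $[p,q]\in\cI_n$ containing $[i,j]$ are exactly those with $1\le p\le i$ and $j\le q\le n$, and each of them automatically has $p<q$. So the claim reads
\[
\sum_{p=1}^{i}\sum_{q=j}^{n}\bigl(A_{p,q}-A_{p-1,q}-A_{p,q+1}+A_{p-1,q+1}\bigr)=A_{i,j}.
\]

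I will evaluate this as an iterated sum. Write $B_{p,q}=A_{p,q}-A_{p,q+1}$. The summand is then $B_{p,q}-B_{p-1,q}$. For fixed $q$, the sum over $p$ from $1$ to $i$ telescopes to $B_{i,q}-B_{0,q}=B_{i,q}$, because row $0$ vanishes by convention. Summing $B_{i,q}=A_{i,q}-A_{i,q+1}$ over $q$ from $j$ to $n$ telescopes to $A_{i,j}-A_{i,n+1}=A_{i,j}$, because column $n+1$ vanishes. This proves the displayed formula.

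It remains to check that the entries used stay within the scope of the convention. The boundary terms have index $p-1=0$ or $q+1=n+1$, which are exactly the out-of-range entries set to zero. Interior entries such as $A_{p-1,q}$ with $p\ge2$ keep $p-1<q$, so no diagonal or below-diagonal entries of $A$ are ever involved. The formula therefore holds for any matrix, with no symmetry assumption.

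The two consequences follow at once. For $[i,j]=[1,n]$, the only interval containing it is $[1,n]$ itself, so $a_{[1,n]}(A)=A_{1,n}$. For the last assertion, specialize the indeterminates to $a_J=a_J(A)$. For $i<j$ the condition $i,j\in J$ for an interval $J$ is equivalent to $[i,j]\subseteq J$. Hence the $(i,j)$ entry of $A_n(a(A))$ equals $\sum_{[i,j]\subseteq J}a_J(A)=A_{i,j}$. Both $A_n(a(A))$ and $A$ are skew-symmetric, so their diagonal and lower-triangular entries agree as well, and therefore $A_n(a(A))=A$.

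There is no real obstacle here; the only care needed is the index bookkeeping at the boundary rows and columns.
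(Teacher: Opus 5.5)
Your proposal is correct and follows the same argument as the paper. You telescope the double sum of interval coefficients first in $p$, using the vanishing row $0$, and then in $q$, using the vanishing column $n+1$; you then read off $a_{[1,n]}(A)=A_{1,n}$ and recover $A_n(a(A))=A$ from the equivalence $i,j\in J\iff[i,j]\subseteq J$ together with skew-symmetry.
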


\begin{proof}
  By the definition of the interval coefficients,
  \begin{align*}
    \sum_{[i,j]\subseteq[p,q]} a_{[p,q]}(A)
    &= \sum_{q=j}^n \sum_{p=1}^i
       (A_{p,q}-A_{p-1,q}-A_{p,q+1}+A_{p-1,q+1})\\
    &= \sum_{q=j}^n (A_{i,q}-A_{i,q+1})\\
    &= A_{i,j},
  \end{align*}
  where both sums telescope.

  The identity $a_{[1,n]}(A)=A_{1,n}$ follows directly from the definition. 
  If $A$ is skew-symmetric, then $A_n(a(A))=A$.
\end{proof}

The interval decomposition allows us to express any skew-symmetric matrix in terms of independent indeterminates indexed by intervals. 
In the next section, we apply this decomposition to the Pfaffian and prove that the resulting polynomial has nonnegative integer coefficients.

\section{Pfaffian Positivity}
\label{sec:pfaffian-positivity}

We use the perfect-matching expansion of the Pfaffian to establish coefficientwise nonnegativity of the Pfaffian of the interval matrix constructed in the previous section. 
The key observation is that adjacent vertices with identical connections to the remaining vertices can be removed without changing the Pfaffian, apart from a multiplicative factor.
This is combined with the convexity of $f$ to show that $\pf(A_f(x)) > 0$.

Let $n$ be a nonnegative even integer and $A$ an $n\times n$
skew-symmetric matrix over a commutative ring. 
Let $\Pi_n$ denote the set of perfect matchings of $[n]$.
We regard each matching $M\in\Pi_n$ as a set of pairs $\{i,j\}$ with $i<j$.
Two pairs $\{i,j\}$ and $\{k,l\}$ cross if $i<k<j<l$ or $k<i<l<j$.
Let $\crossings(M)$ denote the number of crossing pairs in $M$.

Define the weight and signed weight of a matching $M$ by
\[\wt(M)=\prod_{\{i,j\}\in M}A_{i,j} \qquad \swt(M)=(-1)^{\crossings(M)}\wt(M). \]
The Pfaffian of $A$ is
\[ \pf(A)=\sum_{M\in\Pi_n}\swt(M), \]
with the convention $\pf(\emptyset)=1$.
For $S\subseteq[n]$, let $A(S)$ denote the principal submatrix of $A$ indexed by $S$, with indices in increasing order.

Finally, for the interval matrix $A_n(a)$ defined in the previous section, let
\[ \pf_n(a)=\pf(A_n(a)). \]
By \autoref{lem:interval-decomposition}, 
\[ \pf(A)=\pf_n(a(A)) \]
for every skew-symmetric $n\times n$ matrix $A$.

\begin{lemma}[Twin Rows]
  \label{lem:twin-rows}
  Let $A$ be an $ n \times n$ skew-symmetric matrix over a commutative ring, where $n \geq 2$ is an even integer.
  Suppose for some $1 \leq i < n$, \[A_{i,j} = A_{i+1,j} \text{ for } j \not \in \{i, i + 1\}.\]
  Then with $S_i = [n] \setminus \{i, i+1\}$ we get \[\pf (A) = A_{i,i+1} \pf\left(A(S_i) \right).\]
\end{lemma}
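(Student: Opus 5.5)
The plan is to use a congruence transformation rather than the matching expansion directly. Recall that for any $n\times n$ matrix $P$ we have $\pf(P^{T}AP)=\det(P)\pf(A)$. Let $P$ be the elementary matrix that subtracts column $i$ from column $i+1$, namely $P=I-E_{i,i+1}$, so that $\det P=1$. Then $B=P^{T}AP$ is obtained from $A$ by subtracting column $i$ from column $i+1$ and then row $i$ from row $i+1$. This $B$ is again skew-symmetric, and $\pf(B)=\pf(A)$.

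Next we compute the entries of $B$ in row and column $i+1$.
\begin{itemize}
\item For $j\notin\{i,i+1\}$, the hypothesis $A_{i,j}=A_{i+1,j}$ gives $B_{i+1,j}=A_{i+1,j}-A_{i,j}=0$. By skew-symmetry, also $B_{j,i+1}=0$.
\item Row $i$ and column $i$ are unchanged except in position $(i,i+1)$: $B_{i,i+1}=A_{i,i+1}-A_{i,i}=A_{i,i+1}$.
\item All entries with both indices outside $\{i+1\}$ are unchanged, so $B(S_i\cup\{i\})$ agrees with $A(S_i\cup\{i\})$, and in particular $B(S_i)=A(S_i)$.
\end{itemize}
One should double-check that the diagonal entry $(i+1,i+1)$ stays $0$. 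This follows from skew-symmetry of $B$, or directly: $A_{i+1,i+1}-A_{i+1,i}-A_{i,i+1}+A_{i,i}=0$.

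Now expand $\pf(B)$ over matchings. Since row $i+1$ of $B$ vanishes except at column $i$, every matching with nonzero weight must contain the pair $\{i,i+1\}$. This pair crosses no other pair, because no index lies strictly between $i$ and $i+1$. Removing it gives a bijection with matchings of $S_i$ that preserves crossing numbers, with the weight of $\{i,i+1\}$ factored out. Hence
\[
\pf(B)=B_{i,i+1}\pf\bigl(B(S_i)\bigr)=A_{i,i+1}\pf\bigl(A(S_i)\bigr).
\]
Equivalently, one can use the standard row expansion $\pf(B)=\sum_{j\neq i+1}(-1)^{\cdots}B_{i+1,j}\pf(B(\widehat{i+1,j}))$, in which only the $j=i$ term survives, with sign $+1$ for adjacent indices.

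The only delicate point is the congruence identity $\pf(P^{T}AP)=\det(P)\pf(A)$ over an arbitrary commutative ring. It is a polynomial identity in the entries, so it holds universally once it is known over $\mathbb{Z}$.

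If we want to stay strictly within the matching expansion the paper has set up, there is an alternative: pair the matchings in which $i$ and $i+1$ are matched to distinct outside vertices $k,l$ by swapping them. The swap changes the crossing number by exactly one, and the weights agree by the hypothesis, so the two terms cancel. The surviving matchings are those containing $\{i,i+1\}$, which again give $A_{i,i+1}\pf(A(S_i))$. The main obstacle on that route is verifying the parity change of $\crossings$ under the swap. Pairs not involving $i$ or $i+1$ see the same relative position, because $i$ and $i+1$ are adjacent. Only the crossing status between $\{i,k\}$ and $\{i+1,l\}$ flips.
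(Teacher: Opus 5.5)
Your proof is correct, and its main route differs from the paper's; the alternative you sketch at the end is essentially the paper's argument. The paper works entirely inside its crossing-sign matching expansion. Matchings containing $\{i,i+1\}$ produce $A_{i,i+1}\pf(A(S_i))$. The remaining matchings are cancelled by the weight-preserving involution that swaps the partners of $i$ and $i+1$, together with a case check that the swap flips crossing parity.

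You instead apply the transvection $B=P^{T}AP$ with $P=I-E_{i,i+1}$. This clears row and column $i+1$ except at position $(i,i+1)$, so the matching expansion of $\pf(B)$ keeps only the terms containing $\{i,i+1\}$. Your entry computations are right. In fact row and column $i$ are completely unchanged, since $B_{i,i+1}=A_{i,i+1}-A_{i,i}=A_{i,i+1}$.

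Your route is shorter and avoids the crossing case analysis. It is also the technique the paper itself uses later for the Constant Outer Layer lemma, so it would unify the two arguments. The cost is that you import $\pf(P^{T}AP)=\det(P)\pf(A)$, and with it the equivalence of the paper's crossing-sign definition with the standard permutation definition for which that identity is normally proved. The paper's proof is self-contained relative to its own definition, which is also the one used in the Lean formalization.

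The two arguments share the same core. The Pfaffian is linear in row $i+1$ (column $i+1$ being determined by skew-symmetry). So invariance under this particular transvection amounts to $\pf$ vanishing for a matrix whose rows $i$ and $i+1$ agree off $\{i,i+1\}$ and whose $(i,i+1)$ entry is zero. That vanishing is precisely what the paper's involution proves directly.
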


\begin{proof}
  We will consider all the matchings $M \in \Pi_n$.
  First, suppose that $\{i,i+1\} \in M$.
  Then $\{i, i+1\}$ does not cross any other edges in $M$ because there are no points between $i$ and $i+1$.
  So $(-1)^{\crossings(M)} = (-1)^{\crossings(M \setminus \{\{i, i+1\}\})}$.
  Summing over all such $M$ in the expansion of $\pf(A)$ we get $A_{i,i+1}\pf(A(S_i))$.

  Otherwise, suppose that $\{i,i+1\} \not \in M$.
  Then there are $j, j'$ with $\{i,j\}, \{i+1, j'\} \in M$ with $j \neq j'$ and $j,j' \not \in \{i,i+1\}$.
  Let $M' = M \setminus \{\{i,j\},\{i+1,j'\}\} \cup \{\{i,j'\}, \{i+1, j\}\}$, i.e. the matching from swapping the other endpoints of the edges that $i,i+1$ are members of.
  Note then that \[M \mapsto M'\] is an involution on $\{M \in \Pi_n \mid \{i,i+1\} \not \in M\}$ with no fixed points and which preserves weight since $j > i+1$ gives $A_{i,j} = A_{i+1,j}$ and for $j < i$ skew symmetry gives $A_{j,i} = A_{j,i+1}$.

  The result will follow from $\swt(M) = - \swt(M')$.
  There are two cases for the edges in $M,M'$.
  For edges $\{s,t\}$ besides $\{i,j\},\{i+1,j'\}$, there are no points between $i,i+1$ and $s,t \not \in \{i,i+1\}$.
  So $\{s,t\}$ crosses $\{i,j\}$ if and only if it crosses $\{i+1,j\}$ and similarly for $\{i,j'\}$.
  Hence the number of crossings between $\{s,t\}$ and the two edges containing $i,i+1$ are the same in both $M$ and $M'$.

  Now consider, for the edges $\{i,j\},\{i+1,j'\}$ and $\{i,j'\}, \{i+1,j\}$ exactly one of the two configurations of the swapped edges has a crossing.
  If $j,j' > i+1$, then $\{i,j\}$ and $\{i+1,j'\}$ cross if and only if $j < j'$ while $\{i,j'\}$ and $\{i+1,j\}$ cross if and only if $j > j'$.
  The case $j,j' < i$ is symmetric.
  Similarly, if $j < i < i+1 < j'$, then $\{j,i\}$ and $\{i+1 , j'\}$ do not cross, while $\{j,i+1\}$ and $\{i,j'\}$ do.
  And again, $j' < i < i+1 < j$ is symmetric.

  So $M$ and $M'$ have opposite crossing parity, so $\swt(M) + \swt(M') = 0$ and all of the relevant terms cancel from $\pf(A)$.
  Hence we find \[\pf (A) = A_{i,i+1} \pf\left(A(S_i) \right).\]
\end{proof}

We now prove coefficientwise nonnegativity by induction on $n$. The argument relies on the observation that a sufficiently small collection of intervals either leaves a vertex uncovered or contains two consecutive vertices belonging to exactly the same intervals. In the latter case, \autoref{lem:twin-rows} reduces the Pfaffian to one of smaller order, while preserving the interval decomposition.

\begin{theorem}[Coefficient Positivity]
  \label{thm:coefficientwise-positive}
  For every even $n \geq 0$, $\pf_n(a)$ is a polynomial with nonnegative integer coefficients in the indeterminates $a=(a_J)_{J\in\cI_n}$.
  It is homogeneous of degree $n/2$, and for $n\geq 2$ the coefficient of $a_{[1,n]}^{n/2}$ is $1$.
\end{theorem}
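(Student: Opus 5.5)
We argue by induction on $n$, with the trivial cases $n=0$ ($\pf=1$) and $n=2$ ($\pf_2(a)=a_{[1,2]}$). Homogeneity of degree $n/2$ is immediate from the matching expansion: each entry of $A_n(a)$ is linear in the $a_J$, and each matching has $n/2$ edges. The coefficient of $a_{[1,n]}^{n/2}$ is obtained by setting $a_J=0$ for $J\neq[1,n]$. Then every entry above the diagonal equals $a_{[1,n]}$, so repeated use of \autoref{lem:twin-rows} gives $\pf=a_{[1,n]}^{n/2}$. This is also the specialization $A_{i,j}=1$ for $i<j$, whose Pfaffian is $1$.

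For nonnegativity we extract coefficients one monomial at a time. Fix a monomial $m=\prod_J a_J^{e_J}$ of degree $n/2$, and let $\mathcal{S}=\{J: e_J>0\}$, so $|\mathcal{S}|\le n/2$. The coefficient of $m$ in $\pf_n(a)$ equals its coefficient in $\pf(A_n(a)|_{a_J=0,\,J\notin\mathcal{S}})$. It therefore suffices to show that for every family $\mathcal{S}\subseteq\cI_n$ with $|\mathcal{S}|\le n/2$, the Pfaffian of the matrix $B$ with $B_{ij}=\sum_{J\in\mathcal S,\, i,j\in J}a_J$ has nonnegative coefficients.

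The combinatorial step is as follows. The intervals of $\mathcal S$ have at most $2|\mathcal S|\le n$ endpoints. Call $i\in[n-1]$ a \emph{break} if some interval of $\mathcal S$ contains exactly one of $i,i+1$. Every break is witnessed by an interval that starts at $i+1$ or ends at $i$, and each interval $[p,q]$ contributes at most two breaks, namely $p-1$ (if $p>1$) and $q$ (if $q<n$). So there are at most $n$ breaks among $n-1$ positions, which is not yet enough. We refine the count. If some vertex is covered by no interval, its row of $B$ is zero and $\pf(B)=0$. Otherwise every vertex is covered, and in particular $1$ and $n$ are covered. An interval with $p=1$ contributes at most one break, and so does one with $q=n$. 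Coverage of $1$ forces some interval to have $p=1$, so the number of breaks is at most $2|\mathcal S|-1\le n-1$. Equality can still occur, however, so we count more carefully: coverage of $n$ forces some interval with $q=n$. If these are different intervals we lose $2$, giving at most $n-2<n-1$ breaks. If they are the same interval, it is $[1,n]$, which contributes no breaks, again giving at most $2(|\mathcal S|-1)\le n-2$. In either case some $i$ is not a break. Then $i$ and $i+1$ lie in exactly the same intervals, so $B_{i,j}=B_{i+1,j}$ for $j\notin\{i,i+1\}$, and \autoref{lem:twin-rows} gives $\pf(B)=B_{i,i+1}\pf(B(S_i))$.

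It remains to recognize $B(S_i)$ as an interval matrix of size $n-2$. Deleting $i,i+1$ and relabeling order-preservingly sends each interval of $\mathcal S$ to an interval of $[n-2]$, possibly of size $\le1$. An interval of size $\le1$ never contributes to an off-diagonal entry, so $B(S_i)$ is a specialization of $A_{n-2}(a')$ in which each $a'_{J'}$ is a sum of some of the $a_J$ (possibly zero). By induction, $\pf(B(S_i))$ is a nonnegative integer polynomial in the $a'$, hence in the $a$. Since $B_{i,i+1}$ is a sum of indeterminates, the product has nonnegative coefficients. The main obstacle is the break-counting step, which must show that a non-break always exists once every vertex is covered. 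The edge analysis above (using that $1$ and $n$ are covered) is the part I would check most carefully.
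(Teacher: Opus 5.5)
Your proposal is correct and follows essentially the same route as the paper: you fix the support of a monomial, use the coverage of $1$ and $n$ to bound the number of cut gaps by $2|\mathcal{S}|-2\le n-2$, apply the Twin Rows lemma at an uncut gap, and recognize the reduced matrix as a specialization of $A_{n-2}$ at nonnegative sums of the $a_J$. The break-counting step you flagged is handled correctly, exactly as in the paper's argument.
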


\begin{proof}
  Each entry of $A_n(a)$ is linear in the indeterminates $a_J$. 
  Since every perfect matching has $n/2$ edges, the matching expansion shows that $\pf_n(a)$ is a homogeneous polynomial of degree $n/2$ with integer coefficients.

  We prove coefficientwise nonnegativity by induction on even $n$. 
  The base cases are
  \[\pf_0(a)=1, \qquad \pf_2(a)=a_{[1,2]}.\]

  Let $n\geq 4$ and assume the result for $n-2$.
  Fix an arbitrary monomial
  \[
    a^\mu=\prod_{J\in K}a_J^{\mu_J}
  \]
  of degree $n/2$, where $K$ is its support.
  In particular, $|K|\leq n/2$.

  Set
  \[A_n^K(a) = \left.A_n(a)\right|_{a_J=0,\ J\notin K}, \qquad \pf_n^K(a)=\pf(A_n^K(a)).\]
  Specializing variables outside $K$ to zero does not change the coefficient of $a^\mu$. 
  Thus it suffices to show that $\pf_n^K(a)$ has nonnegative coefficients.

  Suppose first that some $v\in[n]$ belongs to no interval in $K$. 
  Then row $v$ of $A_n^K(a)$ is zero, so $\pf_n^K(a)=0$.

  Otherwise, every element of $[n]$ belongs to at least one interval in $K$. 
  We claim that there are two consecutive elements belonging to exactly the same intervals in $K$.

  Say that an interval $J=[p,q]$ \emph{cuts} the gap $g\in[n-1]$ if exactly one of $g,g+1$ belongs to $J$.
  Each interval cuts at most two gaps, one at each endpoint.

  Since every vertex is covered, there is an interval $J_1\in K$ containing $1$ and an interval $J_n\in K$ containing $n$. 
  Each cuts at most one gap. 
  If $J_1=J_n$, then $J_1=[1,n]$ and cuts no gaps. 
  In either case, the total number of gaps cut by intervals in $K$ is at most
  \[
    2|K|-2\leq n-2.
  \]
  As there are $n-1$ gaps, some gap $g$ is not cut by any interval in $K$.

  Consequently,
  \[g\in J \iff g+1\in J \qquad (J\in K).\]
  Hence rows $g$ and $g+1$ of $A_n^K(a)$ agree outside columns $g,g+1$. 
  By the Twin Rows lemma,
  \[\pf_n^K(a) = s\,\pf\bigl(A_n^K(a)(S_g)\bigr),\]
  where
  \[S_g = [n]\setminus\{g,g+1\}, \qquad s=\sum_{\substack{J\in K\\g\in J}}a_J.\]

  It remains to express the smaller matrix in the same interval form.
  Let
  $\phi:S_g\to[n-2]$ be the unique increasing bijection. 
  For each $J\in K$, let
  \[T_J = \phi(J\cap S_g).\]
  Since $g$ is uncut, $J$ contains either both $g,g+1$ or neither. 
  Thus $T_J$ is an interval of $[n-2]$, possibly empty or a singleton.

  For each $L\in\cI_{n-2}$, define
  \[b_L = \sum_{\substack{J\in K\\T_J=L}}a_J.\]
  Empty and singleton images contribute no entries to the smaller matrix.

  Indeed, for $i<j$ in $S_g$,
  \begin{align*}
    A_n^K(a)_{i,j}
    &=\sum_{\substack{J\in K\\i,j\in J}}a_J\\
    &=\sum_{\substack{J\in K\\
             \phi(i),\phi(j)\in T_J}}a_J\\
    &=\sum_{\substack{L\in\cI_{n-2}\\
             \phi(i),\phi(j)\in L}}b_L.
  \end{align*}
  Therefore, after relabeling by $\phi$,
  \[A_n^K(a)(S_g) = A_{n-2}(b),\]
  and hence
  \[\pf_n^K(a) = s\,\pf_{n-2}(b).\]

  By the induction hypothesis, $\pf_{n-2}$ has nonnegative integer coefficients as a polynomial, so $\pf_{n-2}(b)$ has nonnegative integer coefficients.
  Each $b_L$, as well as $s$, is a sum of indeterminates $a_J$ with nonnegative coefficients.
  Thus $\pf_n^K(a)$ also has nonnegative integer coefficients.

  In particular, the coefficient of $a^\mu$ in $\pf_n(a)$ is nonnegative. 
  Since $a^\mu$ was arbitrary, $\pf_n(a)$ is coefficientwise nonnegative.

  Finally, for $n\geq2$, specialize $a_J=0$ for all $J\neq[1,n]$. Then
  \[A_n(a) = a_{[1,n]}\tilde{\bone}_n,\]
  where $\tilde{\bone}_n$ is the skew-symmetric matrix with every entry above the diagonal equal to $1$.
  Repeated application of \autoref{lem:twin-rows} gives $\pf(\tilde{\bone}_n)=1$, starting from $\pf(\emptyset)=1$. 
  Therefore
  \[\left.\pf_n(a)\right|_{a_J=0,\ J\neq[1,n]} = a_{[1,n]}^{n/2},\]
  proving the final assertion.
\end{proof}

As an example, consider the skew-symmetric $4 \times 4$ matrix defined in \autoref{sec:interval-decomposition}.

\begin{example}
  
Although the perfect-matching expansion contains a negative term, the interval decomposition allows the Pfaffian to be rewritten as a polynomial with nonnegative coefficients. 
In the four-point case, the difference between the Pfaffian and the rainbow matching product also has an explicit nonnegative expansion shown below.
\[ A_4(a) = \begin{pmatrix}
  0 & a_{[1,2]} + a_{[1,3]} + a_{[1,4]} & a_{[1,3]} + a_{[1,4]} & a_{[1,4]} \\
  * & 0 & a_{[1,3]} + a_{[1,4]} + a_{[2,3]} + a_{[2,4]} & a_{[1,4]} + a_{[2,4]} \\
  * & * & 0 & a_{[1,4]} + a_{[2,4]} + a_{[3,4]} \\
  * & * & * & 0
\end{pmatrix}
\]
Calculating the Pfaffian of this matrix we find 
\[\pf(A_4(a)) = a_{[1,4]}^2 + a_{[1,2]}(a_{[2,4]} + a_{[3,4]}) + a_{[1,3]} a_{[3,4]} + a_{[1,4]} (a_{[1,2]} + a_{[1,3]} + a_{[2,3]} + a_{[2,4]} + a_{[3,4]}).\]
The first term of this provides the bound in \autoref{thm:pfaffian-positive-diameter}.
Additionally, a slight rearrangement gives 
\[\pf(A_4(a)) - A_4(a)_{1,4} A_4(a)_{2,3} = a_{[1,2]}A_4(a)_{3,4} + a_{[3,4]}A_4(a)_{1,3},\]
which shows the rainbow bound for $n=4$.

\end{example}
The coefficient-positivity and endpoint-inequality results extend these phenomena to arbitrary even order.

Having established coefficientwise nonnegativity of $\pf_n(a)$, we now use the convexity of $f$ to show that the interval coefficients of $A_f(x)$ are nonnegative. 
Together with the strict positivity of $a_{[1,n]}(A_f(x))$, this will give our desired Pfaffian positivity.

\begin{lemma}[Nonnegativity of Interval Coefficients]
  \label{nonnegative-interval-coefficient}
  Let $n \geq 2$, $f : (0, \infty) \to (0, \infty)$ be positive, nonincreasing, and convex and $x_1 < x_2 < \ldots < x_{n}$.
  Then the interval coefficients satisfy $a_J(A_f(x)) \geq 0$ for all $J \in \cI_n$ and $a_{[1,n]}(A_f(x)) = f(x_n - x_1) > 0$.
\end{lemma}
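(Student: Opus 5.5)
We compute the interval coefficients of $A=A_f(x)$ directly from the definition
\[ a_{[p,q]}(A)=A_{p,q}-A_{p-1,q}-A_{p,q+1}+A_{p-1,q+1}, \]
and split into cases according to whether $p=1$ and whether $q=n$. This determines which of the four entries fall outside $[n]$ and are therefore zero. Throughout we use only $p<q$, so every surviving entry lies strictly above the diagonal and equals $f$ of a positive gap $x_j-x_i$.

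\emph{Boundary cases.} For $p=1$, $q=n$ we get $a_{[1,n]}(A)=A_{1,n}=f(x_n-x_1)>0$ by positivity of $f$, as already noted in \autoref{lem:interval-decomposition}. For $p=1$, $q<n$ we get
\[ a_{[1,q]}(A)=f(x_q-x_1)-f(x_{q+1}-x_1), \]
which is $\geq 0$ because $x_{q+1}-x_1>x_q-x_1$ and $f$ is nonincreasing. The case $p>1$, $q=n$ is symmetric:
\[ a_{[p,n]}(A)=f(x_n-x_p)-f(x_n-x_{p-1})\geq 0. \]

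\emph{Interior case.} The case $p>1$, $q<n$ is where convexity is needed. Set
\[ u=x_q-x_p,\quad \alpha=x_p-x_{p-1}>0,\quad \beta=x_{q+1}-x_q>0. \]
Then
\[ a_{[p,q]}(A)=f(u)-f(u+\alpha)-f(u+\beta)+f(u+\alpha+\beta), \]
and we must show $f(u)+f(u+\alpha+\beta)\geq f(u+\alpha)+f(u+\beta)$ for $u>0$. This is the standard "increasing increments" property of a convex function. The map $t\mapsto f(t+\beta)-f(t)$ is nondecreasing in $t$, since the chord slope $(f(t+\beta)-f(t))/\beta$ of a convex function is nondecreasing in $t$. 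Comparing $t=u$ with $t=u+\alpha$ gives
\[ f(u+\beta)-f(u)\leq f(u+\alpha+\beta)-f(u+\alpha), \]
which rearranges to the claim. Equivalently, $u,u+\alpha,u+\beta,u+\alpha+\beta$ satisfy a majorization relation, so Karamata's inequality applies.

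The main point needing care is this interior second-difference inequality. The only subtlety is justifying monotonicity of chord slopes on $(0,\infty)$ from convexity alone, which follows from the three-chord lemma. Monotonicity of $f$ enters only in the boundary cases, and positivity of $f$ only for $a_{[1,n]}$.
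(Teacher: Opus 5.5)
Your proof is correct and matches the paper's argument: the same four-way case split, with monotonicity handling the boundary cases and convexity handling the interior second difference $f(u)+f(u+\alpha+\beta)\ge f(u+\alpha)+f(u+\beta)$. The only difference is cosmetic: the paper gets this inequality straight from the definition of convexity, writing $u+\alpha$ and $u+\beta$ as convex combinations of $u$ and $u+\alpha+\beta$ (weights $\beta/(\alpha+\beta)$ and $\alpha/(\alpha+\beta)$, then reversed) and adding the two resulting inequalities, rather than invoking monotonicity of chord slopes.
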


\begin{proof}
  Write $A = A_f(x)$.
  \begin{itemize}
  \item If $p = 1$ and $q < n$ then $a_{[1,q]}(A) = f(x_q - x_1) - f(x_{q+1} - x_1) \geq 0$ because $f$ is nonincreasing.
  \item If $1 < p$ and $q = n$ then $a_{[p,n]}(A) = f(x_n - x_p) - f(x_n - x_{p-1}) \geq 0$ also because $f$ is nonincreasing.
  \item If $p = 1$ and $q = n$ then $a_{[1,n]}(A) = f(x_n - x_1) > 0$.
  \item Finally, if $1 < p < q < n$, let $r = x_q - x_p$, $s = x_p - x_{p-1}$, and $t = x_{q+1} - x_q$, so $r,s,t > 0$ and \[a_{[p,q]}(A) = f(r) + f(r+s+t) - f(r+s) - f(r+t)\]
    Let $\lambda = \frac{t}{s+t} \in (0,1)$ then $r + s = \lambda r + (1 - \lambda)(r+s+t)$ and $r+t = (1 - \lambda)r + \lambda ( r+s+t)$.
    Convexity of $f$ gives us $f(r+s) \leq \lambda f(r) + (1-\lambda)f(r+s+t)$ and $f(r+t) \leq (1- \lambda)f(r) + \lambda f(r+s+t)$.
    Adding these together, we find $a_{[p,q]}(A) \geq 0$.
  \end{itemize}
  
\end{proof}

\begin{theorem}[Pfaffian Diameter Bound]
  \label{thm:pfaffian-positive-diameter}
  Let $f: ( 0, \infty) \to (0, \infty)$ be positive, nonincreasing, and convex, and $x_1 < x_2 < \ldots < x_{n}$, with $n \geq 2$ an even integer. Then 
  \[\pf \left(A_f(x) \right) \geq f(x_n - x_1)^{n/2} > 0.\]
\end{theorem}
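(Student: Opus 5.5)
The plan is to combine the three ingredients already in place. Set $A = A_f(x)$ and $a = a(A)$, the vector of interval coefficients. By \autoref{lem:interval-decomposition}, $A$ is skew-symmetric and satisfies $A_n(a(A)) = A$. Hence
\[
\pf(A) = \pf_n(a(A)).
\]
This rewrites the Pfaffian as the polynomial $\pf_n$ evaluated at the specific real point $a(A)$.

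Next, \autoref{thm:coefficientwise-positive} says $\pf_n$ is homogeneous of degree $n/2$ with nonnegative integer coefficients, and that the coefficient of $a_{[1,n]}^{n/2}$ is $1$. So I would write
\[
\pf_n(a) = a_{[1,n]}^{n/2} + R(a),
\]
where $R$ is a polynomial with nonnegative coefficients. By \autoref{nonnegative-interval-coefficient}, every coordinate $a_J(A)$ is nonnegative. Every monomial of $R$ is therefore nonnegative at $a(A)$, and so $R(a(A)) \geq 0$.

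Finally, \autoref{nonnegative-interval-coefficient} also gives $a_{[1,n]}(A) = f(x_n - x_1)$, which is strictly positive because $f$ maps into $(0,\infty)$. Putting the pieces together,
\[
\pf(A_f(x)) = f(x_n - x_1)^{n/2} + R(a(A)) \geq f(x_n - x_1)^{n/2} > 0.
\]

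There is no genuine obstacle here, since the work was done in the earlier results. The one point to state explicitly is the evaluation step: a polynomial with nonnegative coefficients, evaluated at a point with nonnegative coordinates, is nonnegative. This is what allows the leading monomial to be isolated as a lower bound.
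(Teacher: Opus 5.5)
Your proposal is correct and follows the paper's proof: write $\pf(A_f(x))=\pf_n(a(A_f(x)))$ via the interval decomposition, then use coefficientwise nonnegativity (with the coefficient of $a_{[1,n]}^{n/2}$ equal to $1$) together with $a_J(A_f(x))\geq 0$ and $a_{[1,n]}(A_f(x))=f(x_n-x_1)>0$. One small correction: the skew-symmetry of $A_f(x)$ comes from its definition, not from the Interval Decomposition lemma, which only supplies $A_n(a(A))=A$ once $A$ is known to be skew-symmetric.
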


\begin{proof}
  Let $A = A_f(x)$ and $a = (a_J(A))_{J \in \cI_n}$.
  By \autoref{nonnegative-interval-coefficient},
  $a_J(A) \geq 0$ for every $J \in \cI_n$.
  Hence, by \autoref{lem:interval-decomposition}
  and \autoref{thm:coefficientwise-positive},
  \[\pf(A) = \pf_n(a) \geq a_{[1,n]}(A)^{n/2} = f(x_n-x_1)^{n/2} > 0.\]
\end{proof}

\section{Rainbow Matching Bound}
\label{sec:rainbow-bound}

The diameter bound uses only one term of the polynomial $\pf_n(a)$. 
We now obtain a stronger bound by comparing the Pfaffian with the product associated to the rainbow matching. 
The argument uses coefficientwise positivity to control certain alternating sums of principal Pfaffians, which in turn yield an inequality relating a Pfaffian to the principal Pfaffian obtained by deleting its two endpoints.

Let $n\geq1$ be odd and $A$ an $n\times n$ skew-symmetric matrix. 
For $1\leq q\leq n+1$, define the alternating prefix sums of its maximal principal Pfaffians by
  \[Z_q(A) = \sum_{i=1}^{q-1} (-1)^{i+1} \pf(A([n] \setminus \{i\})).\]
\begin{lemma}[Alternating Prefix Sums]
  \label{lem:alternating-prefix-sum}
  Let $n \geq 3$ be odd. 
  For independent commuting indeterminates $a=(a_J)_{J\in\cI_n}$, each $Z_q(A_n(a))$ is a polynomial with nonnegative integer coefficients. 
  Consequently, if $A$ is a real skew-symmetric matrix with nonnegative interval coefficients, then
  \[Z_q(A)=Z_q(A_n(a(A)))\geq 0.\]
\end{lemma}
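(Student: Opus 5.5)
The plan is to realize each $Z_q(A_n(a))$ as a coefficient of an ordinary Pfaffian of size $n+1$ that already has interval form. Then \autoref{thm:coefficientwise-positive} gives nonnegativity directly, with no new combinatorics. The case $q=1$ is trivial since $Z_1=0$, so assume $2\le q\le n+1$.

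\emph{Step 1 (bordering).} Adjoin a new vertex, labelled $0$, placed before $1$, so the index set is $\{0,1,\dots,n\}\cong[n+1]$. For a skew-symmetric $n\times n$ matrix $C$ and a row vector $u$, let $B$ be the bordered matrix with $B_{0,i}=u_i$. Expanding the Pfaffian along the first row (equivalently, the matching expansion sorted by the partner of $0$) gives
\[\pf(B)=\sum_{i=1}^n(-1)^{i+1}u_i\,\pf\bigl(C([n]\setminus\{i\})\bigr).\]
The sign is right because the edge $\{0,i\}$ crosses exactly the edges having one endpoint in $\{1,\dots,i-1\}$. Modulo pairs among those, that is $i-1$ crossings in parity. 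Taking $u_i=c$ for $i<q$ and $u_i=0$ otherwise yields $\pf(B)=c\,Z_q(C)$.

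\emph{Step 2 (interval form).} Introduce a fresh indeterminate $c$ attached to the interval $[0,q-1]$, which has at least two elements since $q\ge2$. Attach each $a_J$ to the shifted copy of $J$ in $\{1,\dots,n\}$. The resulting $(n+1)\times(n+1)$ matrix is exactly $A_{n+1}(b)$ for the specialization $b$ sending $b_{[0,q-1]}=c$, $b_{J+1}=a_J$, and all other $b_L=0$. In this matrix the border row is $c$ on columns $1,\dots,q-1$ and $0$ elsewhere, as required. However, the lower block is
\[C=A_n(a)+c\,E_q,\]
where $E_q$ is the interval matrix of $[1,q-1]$. By Step 1, $\pf_{n+1}(b)=c\,Z_q(A_n(a)+cE_q)$. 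Extracting the coefficient of $c^1$ therefore gives exactly $Z_q(A_n(a))$, obtained by setting $c=0$ inside.

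\emph{Step 3 (conclusion).} By \autoref{thm:coefficientwise-positive}, $\pf_{n+1}$ has nonnegative integer coefficients. The specialization $b$ sends distinct interval variables to distinct indeterminates or to $0$, so $\pf_{n+1}(b)$ is coefficientwise nonnegative in $(c,a)$. Hence its $c$-linear part, $Z_q(A_n(a))$, is a polynomial in $a$ with nonnegative integer coefficients. For a real skew-symmetric $A$ with nonnegative interval coefficients, \autoref{lem:interval-decomposition} gives $A=A_n(a(A))$. Since $Z_q$ is a polynomial in the matrix entries, $Z_q(A)=Z_q(A_n(a(A)))$, and evaluating a nonnegative-coefficient polynomial at nonnegative values gives $Z_q(A)\ge0$.

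The main obstacle is the bookkeeping in Step 2. The bordering interval $[0,q-1]$ unavoidably also perturbs the interior block by $cE_q$, and the resolution is to isolate the $c$-linear coefficient. A secondary point is checking the sign convention in Step 1 against the paper's crossing-number definition of the Pfaffian.
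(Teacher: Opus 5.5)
Your proposal is correct and follows essentially the same route as the paper. You border with a new vertex $0$ carrying a fresh interval variable on $[0,q-1]$, apply \autoref{thm:coefficientwise-positive} in order $n+1$, expand along row $0$, and read off $Z_q(A_n(a))$ as the coefficient of the linear term in the new variable; this disposes of the interior perturbation $cE_q$ exactly as the paper does via $B(0)=0\oplus A$, and your crossing-parity check of the expansion sign fills in a detail the paper leaves implicit.
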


\begin{proof}
  The case $q = 1$ is immediate because $Z_1(A_n(a)) = 0$.
  Fix $2 \leq q \leq n+1$ and write $A = A_n(a)$.
  Adjoin a zero row and column to $A$, indexed by $0$ and consider the skew-symmetric matrix 
  \[B(t)=(0\oplus A) +t(\tilde{\bone}_q\oplus 0_{n+1-q}),\]
  with rows and columns indexed by $\{0\} \cup [n]$.

  This is an interval matrix on $\{0, 1, \ldots , n\}$ with interval coefficients $a_J$ for $J \in \cI_n$ and additionally $t = a_{[0,q-1]}$. 
  All other interval coefficients involving $0$ are set to zero.
  Thus by \autoref{thm:coefficientwise-positive} for even-order interval matrices, $\pf(B(t))$ has nonnegative integer coefficients in $t$ and the original interval variables. 

  Expanding the Pfaffian along the $0$ row gives 
  \[ \pf(B(t)) =t\sum_{i=1}^{q-1}(-1)^{i+1} \pf\bigl(B(t)([n]\setminus\{i\})\bigr).\]
  Since $B(0)=0\oplus A$, the coefficient of $t$ in $\pf(B(t))$ is  
  \[\sum_{i=1}^{q-1}(-1)^{i+1} \pf\left(A_n(a)([n] \setminus \{i\})\right) = Z_q(A_n(a)).\]
  Hence $Z_q(A_n(a))$ is coefficientwise nonnegative. 
  Specializing the indeterminates to $a(A)$ gives $Z_q(A) = Z_q(A_n(a(A))) \geq 0$.
\end{proof}

Besides its role in proving the endpoint inequality, this coefficientwise positivity will also provide bounds for bordered Pfaffians.
We next show that the Pfaffian is bounded below by the product of its endpoint entry and the Pfaffian of the remaining principal submatrix. 
Iterating this inequality will yield the Rainbow Bound.

\begin{prop}[Endpoint Inequality]
  \label{prop:endpoint-inequality}
  Let $n \geq 2$ be even.
  For independent commuting indeterminates $a = (a_J)_{J \in \cI_n}$, the polynomial
  \[\pf_n(a) - (A_n(a))_{1,n} \pf(A_n(a)([n] \setminus \{1,n\})) \]
  has nonnegative integer coefficients.

  Consequently, if $A$ is a real $n \times n$ skew-symmetric matrix with nonnegative interval coefficients $a_J(A)\geq 0$ for all $J \in \cI_n$, then 
  \[\pf(A) \geq A_{1,n} \pf\left(A([n] \setminus \{1,n\})\right).\]
  More generally, for every $S \subseteq [n]$ with $|S| \geq 2$ even, 
  \[\pf\left(A(S)\right) \geq A_{\min(S), \max(S)} \pf\left(A(S\setminus \{\min(S), \max(S)\})\right).\]
\end{prop}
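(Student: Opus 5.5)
The plan is to expand $\pf_n(a)$ along the first row. Then I would use the interval decomposition of the entries $A_{1,j}$ to regroup the expansion into alternating prefix sums, which \autoref{lem:alternating-prefix-sum} controls.

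\textbf{Step 1: row expansion.} Write $A=A_n(a)$ and $P_j=\pf(A([n]\setminus\{1,j\}))$ for $2\le j\le n$. The first-row expansion of the Pfaffian gives
\[\pf(A)=\sum_{j=2}^{n}(-1)^{j}A_{1,j}P_j .\]
By \autoref{lem:interval-decomposition}, an interval contains both $1$ and $j$ only if it is of the form $[1,q]$ with $q\ge j$. Hence $A_{1,j}=\sum_{q=j}^{n}a_{[1,q]}$, and interchanging the sums gives
\[\pf(A)=\sum_{q=2}^{n}a_{[1,q]}\sum_{j=2}^{q}(-1)^{j}P_j .\]

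\textbf{Step 2: identify prefix sums.} Let $A'=A([2,n])$, an odd-order matrix of size $n-1$, relabelled by $i=j-1\in[n-1]$. Then $P_j=\pf(A'([n-1]\setminus\{i\}))$ and $(-1)^j=(-1)^{i+1}$, so
\[\sum_{j=2}^{q}(-1)^jP_j=Z_{q}(A').\]
The $q=n$ summand is $a_{[1,n]}Z_n(A')$, and $Z_n(A')$ includes the $j=n$ term $+P_n$ because $n$ is even. Since $A_{1,n}=a_{[1,n]}$, this gives
\[\pf(A)-A_{1,n}P_n=a_{[1,n]}Z_{n-1}(A')+\sum_{q=2}^{n-1}a_{[1,q]}Z_q(A').\]
Here $Z_{n-1}(A')$ is the sum over $j=2,\dots,n-1$, and I will check the index bookkeeping carefully. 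It is harmless in any case, because every $Z_r$ is covered by the lemma.

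\textbf{Step 3: apply the lemma.} Exactly as in the proof of \autoref{thm:coefficientwise-positive}, $A'$ is an interval matrix $A_{n-1}(b)$ on $[2,n]$ relabelled to $[n-1]$. Its variables are $b_L=\sum_{J\,:\,J\cap[2,n]\mapsto L}a_J$, which are nonnegative sums of the $a_J$. By \autoref{lem:alternating-prefix-sum}, each $Z_r(A_{n-1}(b))$ has nonnegative integer coefficients in $b$, hence in $a$. Multiplying by the variables $a_{[1,q]}$ and summing preserves this, which proves the polynomial statement. For $n=2$ the difference is zero, and for $n-1=1$ the lemma's hypothesis $n\ge3$ is not needed. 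Specializing $a=a(A)\ge0$ via \autoref{lem:interval-decomposition} then yields the real inequality.

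\textbf{Step 4: principal submatrices.} For general $S$, I would show that $A(S)$, relabelled increasingly, again has nonnegative interval coefficients. The restriction argument from the proof of \autoref{thm:coefficientwise-positive} applies: each $J\cap S$ is an interval of $S$, and $A(S)=A_{|S|}(b)$ with $b_L$ equal to nonnegative sums of the $a_J(A)$. The first part then applies to $A(S)$.

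The main obstacle is the sign and index bookkeeping in Step 2. It must be arranged so that the subtracted term $A_{1,n}P_n$ exactly removes the last term of the $q=n$ prefix sum, leaving only genuine prefix sums $Z_r(A')$.
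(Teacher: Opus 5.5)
Your proposal is correct and follows essentially the same route as the paper: a first-row expansion, regrouping via $A_{1,j}=\sum_{q\ge j}a_{[1,q]}$ into alternating prefix sums $Z_q$ of the matrix with index $1$ deleted, then the Alternating Prefix Sums lemma, and finally the restriction observation $A_n(a)(S)=A_k(b)$ for general $S$. Your direct interchange of sums is the paper's summation by parts done in one step. Your resulting identity $\pf(A)-A_{1,n}P_n=a_{[1,n]}Z_{n-1}(A')+\sum_{q=2}^{n-1}a_{[1,q]}Z_q(A')$ is exactly the paper's $T_{n-1}=(a_{[1,n-1]}+a_{[1,n]})Z_{n-1}(B)+\sum_{q=2}^{n-2}a_{[1,q]}Z_q(B)$, and the index bookkeeping you flagged as the main obstacle is already right.
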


\begin{proof}
For $n = 2$, we have $\pf_2(a) = (A_2(a))_{1,2} \pf(A_2(a)(\emptyset)) = (A_2(a))_{1,2}$ by the definition of the Pfaffian for $2 \times 2$ matrices.
Observe that restriction to principal submatrices preserves the interval-matrix structure.
Let $S=\{s_1<\ldots<s_k\}\subseteq[n]$.
For each $I\in\cI_n$, let
\[I_S=\{r\in[k]:s_r\in I\}.\]
Then $I_S$ is an interval, possibly empty or a singleton. 
For $J\in\cI_k$, put
\[b_J=\sum_{\substack{I\in\cI_n\\I_S=J}}a_I.\]
After relabeling $S$ by $[k]$, we have
\[A_n(a)(S)=A_k(b).\]

Suppose $n \geq 4$ and let $B=A_n(a)([n]\setminus \{1\})$ indexed increasingly by $[n-1]$. 
By the preceding observation and \autoref{lem:alternating-prefix-sum}, every $Z_q(B)$ has nonnegative integer coefficients in the indeterminates $a_J$.

For $2 \leq m \leq n$ define the partial first row expansion of the Pfaffian 
\[T_m = \sum_{j = 2}^m (-1)^j (A_n(a))_{1,j} \pf \bigl(A_n(a)([n]\setminus \{1,j\}) \bigr). \]
Thus $T_n = \pf_n(a)$.
By the definition of $Z_q(B)$, \[T_m = \sum_{j=2}^m (A_n(a))_{1,j}(Z_j(B) - Z_{j-1}(B)). \]
Note that from the definition of $Z_q$, \[Z_j(B)-Z_{j-1}(B) = (-1)^j\pf\bigl(A_n(a)([n]\setminus\{1,j\})\bigr).\]
Since $Z_1(B)  = 0$, summation by parts then yields 
\[T_m = (A_n(a))_{1,m} Z_m(B) + \sum_{q=2}^{m-1}\bigl((A_n(a))_{1,q}-(A_n(a))_{1,q+1}\bigr)Z_q(B).\]

For $2 \leq q < n$, the definition of the interval matrix gives
\[(A_n(a))_{1,q}-(A_n(a))_{1,q+1}=a_{[1,q]}.\]
Taking $m=n-1$ in the preceding identity, we obtain
\[T_{n-1}=(A_n(a))_{1,n-1}Z_{n-1}(B)+\sum_{q=2}^{n-2}a_{[1,q]}Z_q(B).
\]
Since $(A_n(a))_{1,n-1}=a_{[1,n-1]}+a_{[1,n]}$ and each $Z_q(B)$ has nonnegative integer coefficients, $T_{n-1}$ has nonnegative integer coefficients.

On the other hand, since $n$ is even, the final term in the first-row expansion has positive sign, so
\[T_{n-1}=\pf_n(a)-(A_n(a))_{1,n}\pf(A_n(a)([n]\setminus\{1,n\})).
\]
This proves the coefficientwise assertion.
Specializing the indeterminates to the nonnegative interval coefficients of $A$ gives the first inequality.
Specializing $a$ to $a(A)$ gives $A(S) = A_k(b)$ with every $b_J \geq 0$. 
Evaluating the coefficientwise statement for $k$ at $b$ gives the inequality for $A(S)$.
\end{proof}

We now apply this result repeatedly to obtain the rainbow bound \autoref{thm:rainbow-bound}.

\begin{proof}[Proof of \autoref{thm:rainbow-bound}]
  Let $A$ be a real skew-symmetric matrix with nonnegative interval coefficients.
  For $1 \leq k \leq n/2$, let $S_k = [k, n+1-k]$ and $S_{n/2 + 1} = \emptyset$ and put $Q_k = \pf(A(S_k))$ so $Q_{n/2+1} = 1$.
  By \autoref{prop:endpoint-inequality} applied to $A(S_k)$, we have \[Q_k \geq A_{k, n+1-k} Q_{k+1}.\]
  Since the interval coefficients of $A$ are nonnegative, each $A_{k,n+1-k}\geq0$.
  Iterating the inequality and using $\pf(\emptyset)=1$ gives
  \[
    \pf(A)\geq
    \prod_{k=1}^{n/2}A_{k,n+1-k}.
  \]

  Moreover, by \autoref{lem:interval-decomposition},
  \[A_{k,n+1-k}\geq a_{[1,n]}(A)=A_{1,n},\]
  since the sum defining $A_{k,n+1-k}$ includes the nonnegative term $a_{[1,n]}(A)$.
  Hence
    \[\pf(A)\geq \prod_{k=1}^{n/2}A_{k,n+1-k} \geq A_{1,n}^{n/2}.\]

  Finally, by \autoref{nonnegative-interval-coefficient}, the matrix $A_f(x)$ has nonnegative interval coefficients whenever $f$ is positive, nonincreasing, and convex and $x_1 < x_2 < \ldots < x_n$. 
  Thus
  \[ \pf(A_f(x)) \geq \prod_{k=1}^{n/2} f(x_{n+1-k}-x_k), \]
  as claimed.
\end{proof}

\section{Bordered Pfaffians and Albouy--Moeckel}
The interval decomposition also yields positivity for Pfaffians obtained by adjoining one or two additional vectors. 
These results provide the remaining nonvanishing statements needed for the Albouy--Moeckel conjecture.

The first result concerns the singly bordered Pfaffian corresponding to $K_n^L(x)$ in Albouy--Moeckel.

\begin{cor}[Singly Bordered Pfaffian]
  \label{cor:single-border}
Let $n=2m+1\geq3$, and let $A$ be a real skew-symmetric $n\times n$ matrix with nonnegative interval coefficients. 
Writing
$L=(1,\ldots,1)^T$, we have
\[ \pf\begin{pmatrix}
0 & L^T\\
-L & A
\end{pmatrix} \geq A_{1,n}^m.\]
In particular, if $f$ is positive, nonincreasing, and convex, then
\[ \pf\begin{pmatrix}
0 & L^T\\
-L & A_f(x)
\end{pmatrix} \geq f(x_n-x_1)^m>0.
\]
\end{cor}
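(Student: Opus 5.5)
Index the bordered matrix by $\{0\}\cup[n]$ with the border row in position $0$. Expanding its Pfaffian along row $0$ gives
\[
\pf\begin{pmatrix}0&L^T\\-L&A\end{pmatrix}=\sum_{i=1}^n(-1)^{i+1}\pf\bigl(A([n]\setminus\{i\})\bigr)=Z_{n+1}(A).
\]
This is exactly the construction in the proof of \autoref{lem:alternating-prefix-sum} with $q=n+1$ and $t=1$. In that case the bordered matrix is the even-order interval matrix $B(1)$ on $\{0,\dots,n\}$. Its interval coefficients are $a_J(A)$ for $J\in\cI_n$, together with $a_{[0,n]}=1$, and all other intervals containing $0$ have coefficient $0$. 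One checks this directly: the entry $B_{0,j}=1$ arises only from the interval $[0,n]$, which contains every index. Adding $[0,n]$ also changes the entries $A_{i,j}$ by $t$, though. So I would rather not use $B(1)$ itself. Instead I would use $B(t)=(0\oplus A)+t(\tilde{\bone}_{n+1}\oplus 0)$ and take the coefficient of $t$, as in the lemma.

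The plan is then to extract the monomial. By \autoref{lem:alternating-prefix-sum}, $Z_{n+1}(A_n(a))$ equals the coefficient of $t$ in $\pf_{n+1}$, evaluated at the interval variables $a_{[0,n]}=t$ (with the shifted indexing), $a_J$ for $J\in\cI_n$, and $0$ otherwise. By \autoref{thm:coefficientwise-positive}, $\pf_{n+1}$ has nonnegative coefficients. It therefore suffices to exhibit a monomial $t\cdot a_{[1,n]}^m$ with coefficient at least $1$. Every other term is nonnegative when $a_J(A)\ge0$, and $a_{[1,n]}(A)=A_{1,n}$ by \autoref{lem:interval-decomposition}, so this gives the bound $A_{1,n}^m$.

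To compute that coefficient, I would specialize all variables except $t$ and $a_{[1,n]}=:u$ to zero. The matrix becomes $B=(0\oplus u\tilde{\bone}_n)+t(\tilde{\bone}_{n+1})$ on $\{0\}\cup[n]$. Rows $1,\dots,n$ are then pairwise twins, and row $0$ has all entries $t$. Applying \autoref{lem:twin-rows} to the pair $(1,2)$ strips off a factor $u+t$ and leaves the same shape with $n-2$. Iterating gives $(u+t)^m\cdot t$, whose coefficient of $t$ is $u^m$. Alternatively, the coefficient of $t$ is $Z_{n+1}(u\tilde{\bone}_n)=\sum_i(-1)^{i+1}u^m=u^m$, since $n$ is odd and each principal Pfaffian of $\tilde{\bone}$ equals $1$. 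Specializing back then gives the bordered Pfaffian $\ge A_{1,n}^m$.

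For the kernel case, \autoref{nonnegative-interval-coefficient} supplies nonnegative interval coefficients and $A_{1,n}=f(x_n-x_1)>0$. The main point of care is confirming that the coefficient-of-$t$ extraction preserves coefficientwise nonnegativity when only the $t$-linear part is kept, and that the row expansion sign matches $Z_{n+1}$. Both are already done in the proof of \autoref{lem:alternating-prefix-sum}.
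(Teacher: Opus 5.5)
Your proposal is correct and is essentially the paper's proof. You identify the bordered Pfaffian with $Z_{n+1}(A)$ by expanding along the border row, get coefficientwise nonnegativity from \autoref{lem:alternating-prefix-sum} (the coefficient of $t$ in $\pf(B(t))$ with $q=n+1$), and read off the coefficient $1$ of $a_{[1,n]}^m$ by specializing to $u\tilde{\bone}_n$; your second computation, $Z_{n+1}(u\tilde{\bone}_n)=u^m$, is exactly the paper's.

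The only blemish is the opening aside equating the bordered matrix with $B(1)$. That is false, since $B(1)$ has inner block $A+\tilde{\bone}_n$, but you discard it yourself and it plays no role in the argument.
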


\begin{proof}
  Expanding along the top row, \[\pf\begin{pmatrix} 0 & L^T\\ -L & A \end{pmatrix}  = Z_{n+1}(A).\]
  Applying this identity to $A_n(a)$, \autoref{lem:alternating-prefix-sum} shows that $Z_{n+1}(A_n(a))$ is a polynomial with nonnegative integer coefficients in the interval variables.
  When only $a_{[1,n]} = c$ is nonzero, every maximal principal Pfaffian is $c^m$.
  Thus under this specialization \[Z_{n+1}(A_n(a)) =c^m\sum_{i=1}^{2m+1}(-1)^{i+1}=c^m.\]
  Thus the coefficient of $a_{[1,n]}^m$ in $Z_{n+1}(A_n(a))$ is $1$.
  Since all other coefficients are nonnegative, specializing to the nonnegative interval coefficients of $A$ gives
  \[ Z_{n+1}(A)\geq a_{[1,n]}(A)^m=A_{1,n}^m. \]
  The assertion for convex kernels follows from \autoref{nonnegative-interval-coefficient}.
\end{proof}

The second result concerns the doubly bordered Pfaffian corresponding to $K_n^{xL}(x)$ in Albouy--Moeckel.

\begin{cor}[Doubly Bordered Pfaffian]
  \label{cor:double-border}
Let $n=2m\geq2$, let $A$ be a real skew-symmetric $n\times n$ matrix with nonnegative interval coefficients, and let $x_1<\ldots<x_n$. 
Writing $L=(1,\ldots,1)^T$, we have
\[ \pf\begin{pmatrix}
0 & 0 & x^T\\
0 & 0 & L^T\\
-x & -L & A
\end{pmatrix} \geq A_{1,n}^{m-1} \sum_{k=1}^m(x_{2k}-x_{2k-1}).
\]
In particular, this Pfaffian is strictly positive when $A=A_f(x)$ for any positive, nonincreasing, convex kernel $f$.
\end{cor}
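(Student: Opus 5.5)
The plan is to rewrite the doubly bordered Pfaffian as a nonnegative combination of the gaps $x_{k+1}-x_k$, with coefficients that are themselves (up to sign) coefficients of even-order interval Pfaffians. Then \autoref{thm:coefficientwise-positive} controls every term. Denote the bordered matrix by $P(x,L,A)$. It is linear in the border vector $x$ and vanishes when $x$ is a multiple of $L$, since two rows then coincide. Writing $x_j = x_1+\sum_{k<j}(x_{k+1}-x_k)$ gives
\[
\pf P(x,L,A)=\sum_{k=1}^{n-1}(x_{k+1}-x_k)\,\pf P(\bone_{>k},L,A),
\]
where $\bone_{>k}$ is the indicator vector of $\{k+1,\ldots,n\}$. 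Subtracting the first border row from the second leaves each Pfaffian unchanged. So the $k$-th coefficient equals $\pf P(\bone_{>k},\bone_{\le k},A)$.

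Next I would realize each coefficient as part of an interval matrix of even order $n+2$, in the spirit of the proof of \autoref{lem:alternating-prefix-sum}. Adjoin a vertex $0$ before $1$ and a vertex $n+1$ after $n$. Keep the interval variables of $A$, and add two new variables $t=a_{[0,k]}$ and $u=a_{[k+1,n+1]}$, with every other new interval coefficient set to $0$. The resulting matrix $B(t,u)$ has row $0$ equal to $t\,\bone_{\le k}$ on $[n]$ and row $n+1$ equal to $-u\,\bone_{>k}$ on $[n]$, by skew-symmetry. Its $(0,n+1)$ entry is $0$, because no chosen interval contains both new vertices.

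Expanding, $\pf B(t,u)$ is $tu$ times a Pfaffian of exactly the shape above. Moving vertex $n+1$ next to $0$ and accounting for the sign of the column $-u\,\bone_{>k}$ should give
\[
\pf B(t,u)=\pm\, tu\,\pf P(\bone_{>k},\bone_{\le k},A),
\]
with a sign depending only on $n$ and not on $k$. \autoref{thm:coefficientwise-positive} makes $\pf B$ coefficientwise nonnegative, so each gap coefficient is coefficientwise nonnegative in the variables $a_J$. Specializing to $a(A)\ge 0$ then bounds the Pfaffian below by any single monomial we can identify.

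For the quantitative bound, I would specialize all interval variables to zero except $a_{[1,n]}=c$, so that $A=c\,\tilde{\bone}_n$. Repeated use of \autoref{lem:twin-rows} on adjacent twin rows of $B$ (the blocks $\{1,\ldots,k\}$ and $\{k+1,\ldots,n\}$ each consist of identical rows) should evaluate the $k$-th coefficient as $c^{m-1}$ when $k$ is odd and $0$ when $k$ is even. Then the coefficient of $a_{[1,n]}^{m-1}$ yields exactly $\sum_{k\ \mathrm{odd}}(x_{k+1}-x_k)=\sum_{k=1}^m(x_{2k}-x_{2k-1})$.

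The main obstacle is the bookkeeping: the global sign in the identification of $\pf B$ with the bordered Pfaffian, and the parity evaluation for $c\,\tilde{\bone}_n$. I would check both against the case $n=2$, where the Pfaffian is directly $c^{0}(x_2-x_1)$, and fix the sign convention there. The strict positivity for $A_f(x)$ then follows from \autoref{nonnegative-interval-coefficient}, since $A_{1,n}=f(x_n-x_1)>0$ and $x_2>x_1$.
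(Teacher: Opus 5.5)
Your route is the paper's: the same gap decomposition $x=x_1L+\sum_k (x_{k+1}-x_k)\bone_{>k}$, the same replacement of $L$ by $\bone_{\le k}$, and the same embedding into an $(n+2)$-point interval matrix with $a_{[0,k]}=t$ and $a_{[k+1,n+1]}=u$. One step is false as written, namely $\pf B(t,u)=\pm tu\,\pf P(\bone_{>k},\bone_{\le k},A)$.

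The interval $[0,k]$ contains every pair $i<j$ in $[1,k]$, so $t$ is also added to those interior entries. Likewise $u$ is added to every entry inside $[k+1,n]$. The correct identity is
\[
\pf B(t,u)=tu\,\pf P\bigl(\bone_{>k},\bone_{\le k},A'\bigr),\qquad A'=A+t\,(\tilde{\bone}_k\oplus 0_{n-k})+u\,(0_k\oplus\tilde{\bone}_{n-k}).
\]
The sign is $(-1)^{n+1}\cdot(-1)=+1$ for every even $n$. Your own specialization exhibits the discrepancy. With $A=c\,\tilde{\bone}_n$ and $k$ odd, collapsing twin rows in $B$ gives $\pf B=tu\,(c+t)^{(k-1)/2}(c+u)^{(n-k-1)/2}$, not $tu\,c^{m-1}$. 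Your proposed check at $n=2$ cannot detect this, since both blocks are then singletons.

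The repair is what the paper does. Every nonzero matching pairs $0$ with some $i\le k$ and $n+1$ with some $j>k$. Hence the coefficient of $tu$ in $\pf B(t,u)$ is obtained by setting $t=u=0$ in $A'$, and it equals $\pf P(\bone_{>k},\bone_{\le k},A_n(a))$. Extracting a coefficient preserves coefficientwise nonnegativity, so your conclusion survives: each gap coefficient has nonnegative coefficients in the $a_J$.

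With that change the proof goes through. Your evaluation of the $a_{[1,n]}^{m-1}$ coefficient collapses twin rows of $P(\bone_{>k},\bone_{\le k},c\,\tilde{\bone}_n)$ to a $4\times 4$ Pfaffian equal to $1$ (odd $k$) or a zero $2\times 2$ block (even $k$). This is a valid alternative to the paper's route, which expands along the first border row and uses that the all-ones singly bordered Pfaffians equal $c^{m-1}$.
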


\begin{proof}
  Let $d_g = x_{g+1} - x_g > 0$ and $h_g = \mathbf{1}_{\{g+1, \ldots, n\}}$.
  Then \[x = x_1 L  + \sum_{g=1}^{n-1} d_g h_g\] gives
  \[ \pf\begin{pmatrix}
    0 & 0 & x^T\\
    0 & 0 & L^T\\
    -x & -L & A
    \end{pmatrix} = 
      \sum_{g=1}^{n-1} d_g D_g(A),
  \]
  where $D_g(A)$ is the Pfaffian of the doubly bordered matrix with $x$ replaced by $h_g$.
  Since the Pfaffian is alternating in its two bordering vectors, replacing $L$ by $L-h_g$ leaves $D_g(A)$ unchanged. But $L-h_g=\mathbf{1}_{\{1,\ldots,g\}}$.
  Now, consider the above matrix in interval variables as $D_g(A_n(a))$, and add in a $0^\text{th}$ and $n+1$ row and column with $a_{[0,g]} = s$ and $a_{[g+1, n+1]} = t$.
  Denote this new matrix by $B_g(s,t)$.
  Then $D_g(A_n(a))$ is the $st$ coefficient of $\pf(B_g(s,t))$.

  Every matching contributing to the coefficient of $st$ must pair vertex $0$ with some $i\leq g$, contributing one factor of $s$, and vertex $n+1$ with some $j>g$, contributing one factor of $t$. 
  Although the added intervals also modify entries within the original $n\times n$ block, only the terms independent of $s,t$ from these entries can contribute to the coefficient of $st$. 
  These terms are precisely the entries of $A_n(a)$.
  Moving vertex $n+1$ to the first position introduces the permutation sign $(-1)^{n+1}=-1$, since $n$ is even. 
  Its bordering entries simultaneously change sign, so the two signs cancel.
  Then by \autoref{thm:coefficientwise-positive} $D_g(A_n(a))$ has nonnegative integer coefficients in the interval variables; so we only need to compute one coefficient to get a lower bound.

  Returning to the original $(h_g,L)$ form of $D_g(A)$, expand along the first bordering row. 
  Under the specialization $a_{[1,n]}=c$ and all other variables zero, each remaining singly bordered principal Pfaffian has the all-ones border and equals $c^{m-1}$.
  So expanding along the first border row, we get 
    \[ D_g(A)=c^{m-1}\sum_{j=g+1}^{2m}(-1)^j =
    \begin{cases}
    c^{m-1}, & g\text{ odd},\\
    0, & g\text{ even}.
    \end{cases}\]
  Consequently,
    \[
    \sum_{g=1}^{n-1}d_g D_g(A) \geq A_{1,n}^{m-1}\sum_{k=1}^{m}d_{2k-1},
    \]
which is the desired inequality.
\end{proof}

With $f(r) = r^{-2}$, and $x=(x_1,\ldots,x_n)^T$ and $L=(1,\ldots,1)^T$ of appropriate size, the Pfaffians of Albouy and Moeckel, in their normalization, satisfy

\begin{align*}
  K_{2m}(x) & = (-1)^m m! \pf\left(A_f(x)\right)\\
  K^L_{2m+1}(x) & = (-1)^m m! \pf\begin{pmatrix}
0 & L^T\\
-L & A_f(x)
\end{pmatrix}\\
  K^{xL}_{2m}(x) &= (-1)^m (m-1)! \pf\begin{pmatrix}
    0 & 0 & x^T\\
    0 & 0 & L^T\\
    -x & -L & A_f(x)
    \end{pmatrix}
\end{align*}

\begin{cor}[Albouy--Moeckel]
  For the Newtonian kernel $f(r)=r^{-2}$, the ordinary Pfaffians in even order, the singly bordered Pfaffians in odd order, and the doubly bordered Pfaffians in even order are all nonzero for every strictly ordered configuration. 
  Consequently, the nonvanishing conjecture of Albouy and Moeckel \cite{albouyMoeckel2000} holds.
\end{cor}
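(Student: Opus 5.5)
The plan is to specialize to the Newtonian kernel and read off nonvanishing from the three lower bounds already proved, using the normalization identities displayed just before the statement. First check that $f(r)=r^{-2}$ is admissible on $(0,\infty)$. It is positive. It is strictly decreasing, since $f'(r)=-2r^{-3}<0$. It is convex, since $f''(r)=6r^{-4}>0$. Hence \autoref{nonnegative-interval-coefficient} applies: for every configuration $x_1<\cdots<x_n$, the matrix $A_f(x)$ has nonnegative interval coefficients, and $a_{[1,n]}(A_f(x))=f(x_n-x_1)>0$.

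Next treat the three families in turn. For the even-order ordinary Pfaffian, \autoref{thm:pfaffian-positive-diameter} gives
\[\pf(A_f(x))\geq (x_n-x_1)^{-n}>0,\]
so $K_{2m}(x)=(-1)^m m!\,\pf(A_f(x))$ is nonzero and has sign $(-1)^m$. For the odd-order singly bordered Pfaffian, \autoref{cor:single-border} gives the bound $f(x_n-x_1)^m>0$, so $K^L_{2m+1}(x)\neq0$. For the even-order doubly bordered Pfaffian, \autoref{cor:double-border} gives the lower bound
\[f(x_n-x_1)^{m-1}\sum_{k=1}^m (x_{2k}-x_{2k-1}),\]
which is strictly positive because each difference $x_{2k}-x_{2k-1}$ is positive and $m\geq1$. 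Therefore $K^{xL}_{2m}(x)\neq0$. Since the factors $m!$ and $(m-1)!$ are nonzero, all three quantities are nonzero.

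The only point requiring care, and the main obstacle, is to confirm that the displayed normalizations really match those of Albouy--Moeckel. I would verify the matrix entry conventions in \cite{albouyMoeckel2000} against the definition of $A_f(x)$, namely entry $f(x_j-x_i)$ for $i<j$ with $f(r)=r^{-2}$. I would check that their $K$'s are defined as determinant-type or Pfaffian-type expressions equal to the displayed multiples of our Pfaffians, including the bordering conventions and the sign placement of $x$ and $L$. Any discrepancy in sign or in a nonzero scalar factor, for example from a different ordering of the bordering vectors or from their symmetrized sum over matchings producing the factorial, does not affect nonvanishing. With the identification in place, the three nonvanishing assertions of the conjecture follow for every noncollision collinear configuration.
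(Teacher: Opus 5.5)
Your proposal is correct and matches the paper's proof. Both check that $f(r)=r^{-2}$ is positive, nonincreasing, and convex, then invoke the diameter bound and the singly and doubly bordered corollaries to get strict positivity, and use the displayed normalizations to conclude that $K_{2m}$, $K^L_{2m+1}$, and $K^{xL}_{2m}$ are nonzero. Your extra remark that any sign or nonzero-scalar discrepancy in those normalizations is harmless is a sensible safeguard, since the paper simply asserts the identities without checking them.
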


\begin{proof}
  The function $f(r)=r^{-2}$ is positive, nonincreasing, and convex. 
  The ordinary Pfaffian is strictly positive by \autoref{thm:pfaffian-positive-diameter}, and both bordered Pfaffians are strictly positive by \autoref{cor:single-border} and \autoref{cor:double-border}.
\end{proof}

By Theorem 1 of Albouy and Moeckel \cite{albouyMoeckel2000} the nonvanishing of these Pfaffians provides that $\dim S(x) = 2$ where $S(x)$ is the space of real mass vectors making the configuration central.

\section{Optimality and Examples}
\label{sec:optimal-bound}

We now show that the rainbow matching is the only perfect matching whose associated product gives a universal lower bound. 
In fact, for any other matching, we construct positive, nonincreasing, convex kernels and point configurations for which the ratio of the Pfaffian to that matching product approaches zero.

\begin{lemma}[Affine Kernel Pfaffian]
  \label{lem:affine-kernel-pfaffian}
  Let $n \geq 2$ be even, $c \in \bbR$ and $x_1, \ldots, x_n \in \bbR$ not necessarily ordered. 
  Let $A$ be the skew-symmetric matrix with $A_{i,j} = c - (x_j - x_i)$ for $i <  j$.
  Then \[\pf(A) = c^{n/2-1} \left(c - \sum_{k=1}^{n/2} (x_{2k} - x_{2k-1})\right).\]
\end{lemma}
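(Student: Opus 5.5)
The plan is to prove the formula by induction on $n$, using \autoref{lem:twin-rows} on the pair of rows $1,2$ after a row/column operation. Such operations scale the Pfaffian predictably: adding a multiple of row/column $j$ to row/column $i$ leaves the Pfaffian unchanged. Before starting, note that the identity is polynomial in $c$ and $x$, so nothing depends on ordering or positivity of the $x_i$.

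For the base case $n=2$, the formula reads $\pf(A)=A_{1,2}=c-(x_2-x_1)$, which matches.

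For the inductive step with $n\ge 4$, write $A_{i,j}=c-x_j+x_i$ for $i<j$. For $j\ge 3$ we have $A_{1,j}-A_{2,j}=x_1-x_2$, a constant independent of $j$, and the column-$2$ entries behave the same way by skew-symmetry.

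Replace row and column $1$ by (row $1$) $-$ (row $2$) and (column $1$) $-$ (column $2$), which preserves the Pfaffian. The new first row has entries $x_1-x_2$ in columns $j\ge3$. Its entry in column $2$ is still $A_{1,2}=c-x_2+x_1$, because the corrections cancel by skew-symmetry.

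This new row is not a twin of row $2$, so instead I would expand the Pfaffian along the new first row. The term pairing $1$ with $2$ gives $(c-x_2+x_1)\pf(A(S_1))$. The remaining terms give $(x_1-x_2)\sum_{j\ge3}(-1)^j \pf(A([n]\setminus\{1,j\}))$.

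That alternating sum is exactly the Pfaffian of the matrix obtained from $A([n]\setminus\{1\})$ by bordering index $2$ with an all-ones row. So it suffices to compute the bordered Pfaffian $P(y)=\pf\begin{pmatrix}0&L^T\\-L&A'\end{pmatrix}$, where $A'$ is the affine matrix on $n-1$ points $y$.

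Using row operations with the border row $L$, subtract multiples of the border row from the other rows: add $\lambda_i L$ to row $i$ to kill the $c$ and $x_i$ contributions. Concretely, $A'_{i,j}=c-y_j+y_i$. Adding the border row times $(-c - y_i)$ to row $i$ and symmetrising, the matrix becomes one with entries that reduce the affine kernel to the antisymmetric part $y_i-y_j$ plus rank-one corrections, all absorbed by the border.

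A cleaner route is to expect $P(y)=c^{(n-2)/2}$, since the $c$-free affine part $y_i-y_j$ (rank $2$) together with the border gives rank at most $3$. For $n-1\ge3$ this kills the degree-zero-in-$c$ terms, and a degree count settles the rest. I would verify this guess by the same induction.

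Combining, $\pf(A)=(c-x_2+x_1)\,c^{n/2-2}\bigl(c-\sum_{k\ge2}(x_{2k}-x_{2k-1})\bigr)-(x_2-x_1)\,c^{n/2-1}$ plus a correction, and this must simplify to the claim. This is where I expect the main obstacle: getting the bordered Pfaffian's exact value and the signs right.

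If the bookkeeping fails, I would fall back on a rank argument. Write $A=cJ+N$, where $J=\tilde{\bone}_n$ and $N_{i,j}=x_i-x_j$, the latter being rank $2$ and skew. Then $\pf(cJ+N)$ is a polynomial in $c$ whose terms of $c$-degree below $n/2-1$ vanish, because $N$ has rank $2$ and the minor expansion of a Pfaffian of a sum only involves principal Pfaffians of $N$ of order at most $2$. The top coefficient is $\pf(J)=1$ by \autoref{lem:twin-rows}. The $c^{n/2-1}$ coefficient is $\sum_{i<j}\pm N_{i,j}\pf(J([n]\setminus\{i,j\}))=\sum_{i<j}(-1)^{i+j+1}(x_i-x_j)$. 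The last step is then checking that this alternating sum equals $-\sum_k(x_{2k}-x_{2k-1})$, which is a direct count of the signs with which each $x_i$ appears.
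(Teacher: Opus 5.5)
Your closing rank argument is a complete proof, and it takes a different route from the paper's. Write $A=c\tilde{\bone}_n+N$ with $N=x\mathbf{1}^T-\mathbf{1}x^T$. The sum expansion $\pf(X+Y)=\sum_{S}\epsilon_S\pf(X_S)\pf(Y_{[n]\setminus S})$ then keeps only the terms with $|S|\in\{0,2\}$, because $\pf(N_S)^2=\det N_S=0$ when $|S|\ge4$. The $|S|=0$ term is $c^{n/2}\pf(\tilde{\bone}_n)=c^{n/2}$. Your sign count for the $|S|=2$ terms is also right: $x_m$ gets coefficient $+1$ for odd $m$ and $-1$ for even $m$, which gives $-\sum_k(x_{2k}-x_{2k-1})$.

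The one ingredient you quote without justification is the sign $(-1)^{i+j+1}$ for $S=\{i,j\}$. It follows from the crossing-parity observation already used in the Twin Rows lemma: in any matching containing $\{i,j\}$, the number of edges crossing $\{i,j\}$ is congruent to $j-i-1$ modulo $2$.

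The paper instead conjugates by $\begin{pmatrix}1&-1\\0&1\end{pmatrix}^{\oplus n/2}$. This kills the odd--odd block, so $\pf(A)=\det(cI_{n/2}-d\mathbf{1}^T)$, and the matrix determinant lemma finishes. That route is shorter and shows directly why the consecutive pairs $\{2k-1,2k\}$ appear. Yours isolates the underlying mechanism: a rank-two skew perturbation of $c\tilde{\bone}_n$ contributes only linearly. It also gives, at no extra cost, $\pf(c\tilde{\bone}_n+N)=c^{n/2}+c^{n/2-1}\sum_{i<j}(-1)^{i+j+1}N_{i,j}$ for every rank-two skew $N$.

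Your first, inductive route is not needed, but as written it has two loose ends.
\begin{itemize}
\item The sum $\sum_{j\ge3}(-1)^j\pf(A([n]\setminus\{1,j\}))$ is not the all-ones bordered Pfaffian $P$ of $A([n]\setminus\{1\})$. It equals $P-\pf(A([3,n]))$, and this missing $j=2$ term is exactly your unnamed correction.
\item The value $P=c^{n/2-1}$ is only guessed. To prove it, add $-y_i$ times the border row and column to row and column $i$. This removes the $y_i-y_j$ part and leaves $c\tilde{\bone}_{n-1}$; the border cannot absorb the $c$ part, contrary to what your description suggests. Expanding along the border then gives $c^{n/2-1}\sum_{i=1}^{n-1}(-1)^{i+1}=c^{n/2-1}$.
\end{itemize}
With these two fixes, set $d_1=x_2-x_1$ and $D'=\sum_{k\ge2}(x_{2k}-x_{2k-1})$. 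Then $\pf(A)=(c-d_1)c^{n/2-2}(c-D')-d_1c^{n/2-2}D'$, which simplifies to the claimed formula, so the induction also closes.
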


\begin{proof}
  Let $d_k = x_{2k} - x_{2k-1}$ for $1 \leq k \leq n/2$, and subtract row $2k$ from row $2k - 1$ and then column $2k$ from column $2k - 1$.
  This is multiplying on the left by $P = \begin{pmatrix}1 & -1 \\ 0 & 1 \end{pmatrix}^{\oplus \frac{n}{2}}$ and on the right by $P^T$.
  Since $\pf(PAP^T) = \det(P) \pf(A)$ and $\det(P) = 1$, this operation preserves the Pfaffian.
  Let $C = PAP^T$.
  Letting $\delta_{ij} = \begin{cases} 1 & i=j \\ 0 & i \neq j\end{cases}$ we can calculate 
  \begin{align*}
    C_{2k-1,2l-1} &= 0 \\
    C_{2k-1, 2l} &= c \delta_{kl} - d_k \qquad 1 \leq k,l\leq n/2
  \end{align*}
  Consequently, every nonzero matching pairs odd indices exclusively with even indices, so the even--even entries do not contribute to the Pfaffian.
  Let $H$ be the odd-even submatrix of $C$ from taking $H_{k,l} = C_{2k-1,2l}$.
  Reordering the odd-indexed vertices before the even-indexed vertices introduces the sign $(-1)^{\binom{n/2}{2}}$. 
  The Pfaffian-determinant identity for the resulting block matrix introduces the same sign, so they cancel, giving $\pf(A)=\pf(C)=\det(H)$.
  Note that $H = cI_{n/2} - d \mathbf{1}^T$ where $d = (d_1, \ldots, d_{n/2})^T$ is the vector of gaps within the pairs $\{2k-1, 2k\}$.
  Then by the matrix-determinant lemma, for $ c \neq 0$,
  \begin{align*}
     \det(cI_{n/2} - d \mathbf{1}^T) &= \left(1 - \mathbf{1}^T(cI_{n/2})^{-1}d\right)\det(cI_{n/2}) \\
    &= \left(1 - \frac{1}{c}\mathbf{1}^Td\right)c^{n/2} \\
    &= c^{n/2 - 1}\left(c - \sum_{k=1}^{n/2}d_k\right)
  \end{align*}
  For $c = 0$, the result holds as well, because this is a polynomial equation in $c$.
  Since $d_k = (x_{2k} - x_{2k-1})$, this is what we wanted to prove.
\end{proof}

\begin{lemma}[Constant Outer Layer]
  \label{lem:constant-outer-layer}
  Let $n \geq 4$ be even. 
  If $A$ is an $n \times n$ skew-symmetric matrix such that, for some $c \in \bbR$, $A_{1,j} = c$ for all $2 \leq j \leq n$ and $A_{i,n} = c$ for all $1 \leq i < n$, then 
  \[\pf(A) = c \pf\left(A([2,n-1])\right).\]
\end{lemma}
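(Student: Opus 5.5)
Rows $1$ and $n$ both carry the constant value $c$ outside their mutual entry, so the plan is to reduce everything to twin rows, where \autoref{lem:twin-rows} applies. Row $1$ has $A_{1,j}=c$ for $j\geq 2$. Row $n$ has $A_{n,j}=-A_{j,n}=-c$ for $j<n$. These two rows are negatives of each other rather than twins, and they are not adjacent. We fix both issues with a congruence and a relabeling.

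First, negate row and column $n$, that is, conjugate by $D=\mathrm{diag}(1,\ldots,1,-1)$. This multiplies the Pfaffian by $\det D=-1$. In $A'=DAD$ we have $A'_{n,j}=c$ for $2\le j\le n-1$, which matches $A'_{1,j}=c$, and $A'_{1,n}=-c$.

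Second, move vertex $n$ to position $2$ via the cyclic shift of positions $2,\ldots,n$. This is a cycle of length $n-1$, which is odd, so the permutation is even and the Pfaffian is unchanged. Call the reordered matrix $A''$, with old vertex $n$ now at position $2$ and old vertices $2,\ldots,n-1$ at positions $3,\ldots,n$. The new row $1$ has entries $c$ in every column $j\geq 3$. The new row $2$ (old $n$) has $A''_{2,j}=A'_{n,j}$, and for $j\ge 3$ this is $-A'_{j,n}$ after the reordering. We must recheck the orientation here. Reordering puts old $n$ before old $j$, so the $(2,j)$ entry is $A'_{n,j}=-A'_{j,n}=-(-c)=c$, where we use $A'_{j,n}=-A_{j,n}=-c$. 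So rows $1$ and $2$ agree outside columns $1,2$, with $A''_{1,2}=A'_{1,n}=-c$.

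Applying \autoref{lem:twin-rows} with $i=1$ gives $\pf(A'')=-c\,\pf(A''(\{3,\ldots,n\}))$, and this principal submatrix is exactly $A([2,n-1])$ in its original order. Combining the factors gives $\pf(A)=-\pf(A')=-\pf(A'')=c\,\pf(A([2,n-1]))$.

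The main obstacle is the sign bookkeeping. We need the parity of the cyclic shift (even, since it is an $(n-1)$-cycle), the effect of the diagonal congruence, and the orientation of the entries after relabeling. If this gets confusing, a fallback is to redo the argument of \autoref{lem:twin-rows} directly on the matching expansion. Pair each matching containing $\{1,j\}$ and $\{k,n\}$ with the matching containing $\{1,k\}$ and $\{j,n\}$. Both have weight $c^2$ times the rest, and one would check that their crossing parities differ. The matchings containing $\{1,n\}$ contribute $c\,\pf(A([2,n-1]))$ directly, since the edge $\{1,n\}$ crosses nothing.
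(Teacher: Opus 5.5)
Your proof is correct, but it takes a different route from the paper.

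The paper uses a single elementary congruence. With $P = I_n + E_{1,n}$, the matrix $B = P^TAP$ is $A$ with row and column $1$ added to row and column $n$. Hence $B_{i,n} = A_{i,n} + A_{i,1} = c - c = 0$ for $2 \le i \le n-1$, while $B_{1,n} = c$ and the block $[2,n-1]$ is untouched. Since $\det P = 1$ and the edge $\{1,n\}$ crosses nothing, only matchings containing $\{1,n\}$ survive, and the identity follows at once.

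You instead use two steps to turn the ``anti-twin'' rows $1$ and $n$ into adjacent genuine twins, then invoke the Twin Rows lemma:
\begin{itemize}
\item the congruence by $D=\mathrm{diag}(1,\ldots,1,-1)$, which contributes the sign $\det D=-1$;
\item the $(n-1)$-cycle relabeling, which contributes sign $+1$ because $n$ is even.
\end{itemize}
The factor $-1$ from $\det D$ then cancels against $A''_{1,2}=-c$, as you found.

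The paper's route is shorter and needs almost no sign bookkeeping. Yours shows that the Constant Outer Layer lemma is really a disguised instance of Twin Rows, and it stays within the paper's combinatorial toolkit.

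One notational slip: after relabeling, the $(2,j)$ entry of $A''$ is $A'_{n,j-1}$, not $A'_{n,j}$. You are silently switching to old indices midway. This is harmless, but you should make it explicit.

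Your fallback involution also goes through, and it needs no congruence at all. Exactly one of the two configurations $\{1,j\},\{k,n\}$ and $\{1,k\},\{j,n\}$ is crossing. For any other edge $\{s,t\}\subseteq[2,n-1]\setminus\{j,k\}$, exactly one of $s,t$ lies above $k$ iff exactly one lies below $k$. So $\{s,t\}$ crosses $\{k,n\}$ iff it crosses $\{1,k\}$, and likewise for $j$. The swap is therefore weight-preserving and sign-reversing.
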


\begin{proof}
  Let $E_{i,j}$ be the matrix with $1$ in the $i,j$ position and $0$ elsewhere.
  Set $P = I_n + E_{1,n}$ and $B = P^T A P$; $B$ is $A$ with row $1$ added to row $n$ and similarly for columns $1$ and $n$.
  As in the previous lemma, $\det P = 1$, so $\pf(A) = \pf(B)$.
  Then $B_{i,n} = 0$ for $i \neq 1$ and $B_{1,n} = c$.
  Also $B([2,n-1]) = A([2,n-1])$ because only the first and last rows and columns were modified.
  Expanding $\pf(B)$ along the last column, the only surviving matchings involving $n$ must have the $\{1,n\}$ edge, so $\pf(A) = c \pf\left(A([2,n-1])\right)$.
\end{proof}

Consider the case of $n = 4$.

\begin{example}
  Let $T > 0$ and set $x = (0, T, T+1, 2T + 1)$ and $f(r) = 1 + (2T + 1 - r)_{+}$ where $(x)_+ = \max\{x,0\}$.
  Then $f$ is positive, nonincreasing, and convex.
  We have that 
  \[A_f(x) = \begin{pmatrix}
    0 & T + 2 & T + 1 & 1 \\ 
    -(T + 2) & 0 & 2T + 1 & T + 1 \\
    -(T + 1) & -(2T + 1) & 0 & T + 2 \\
    - 1 & -(T + 1) & -(T + 2) & 0
  \end{pmatrix}\]
  Hence $\pf(A_f(x)) = 4(T+1)$.
  Our rainbow bound gives $1 \cdot (2T + 1) < 4(T + 1)$, but the adjacent matching gives $(T+2)^2$ and the crossing matching gives $(T+1)^2$, both of which exceed $\pf(A_f(x))$ for $T > 3$.
  The diameter bound from \autoref{thm:pfaffian-positive-diameter} only provides a lower bound of $1$, which is much weaker.
\end{example}

This shows that for $n = 4$, only the rainbow matching bound survives.
We now generalize this construction to all even $n$.

\begin{proof}[Proof of \autoref{thm:rainbow-bound-optimal}]
Let $k$ be the number of consecutive outer rainbow pairs \[ \{1,n\}, \{2,n-1\},\ldots, \{k, n+1 - k\}\] contained in $M$ stopping at the first missing pair.
Put $N = n - 2k$ and $m = N/2$.
Since $M \neq R$, we have $N \geq 4$ because $k \leq n/2 - 2$; if $k = n/2 - 1$, then the remaining vertices are forced to pairs as $\{n/2, n/2+1\}$ so $M = R$.
Relabel (preserving order) the matching induced on $[k+1, n-k]$ as $M_0 \in \Pi_N$.
Then $\{1,N\} \not \in M_0$ by construction.

For $T > 0$, take the $N$-tuple 
\[ y = (0, T, T+1, \ldots T+N - 3, 2T+ N - 3)\]
Note that $y_1 = 0$, $y_i = T + (i-2)$ for $i \in [2, N-1]$, and $y_N  = 2T + N - 3 = D$.
Define the positive, nonincreasing, convex function \[f_T(r) = 1 + (D-r)_+\] where $(x)_+ = \max\{x,0\}$.
All the pairwise distances among the $y_i$ are at most $D = y_N - y_1$, so for $i < j$ we have $f_T(y_j - y_i) = c-(y_j - y_i)$ where $c = D+1 = 2T +N - 2$.
Then the gaps within pairs of consecutive points sum to $2T + m-2 = \sum_{j=1}^m \left(y_{2j} - y_{2j-1}\right)$ hence by \autoref{lem:affine-kernel-pfaffian} we have 
\[\pf(A_{f_T}(y)) = c^{m - 1}(c - (2T + m - 2)) = m(2T + N - 2)^{m-1}.\]

We know $\{1,N\}$ is not in the matching $M_0$ and 
\[f_T(y_j - y_1), f_T(y_N - y_i) \geq T+1 \text{ for all  } i,j \in [2,N-1].\] 
Similarly, $f_T(y_j - y_i) \geq (2T+1)$ for all $i < j \in [2,N-1]$.
This means the product associated with $M_0$ is at least $(T+1)^2 (2T+1)^{m-2}$.
Note that \[\lim_{T \to \infty} \frac{m(2T + N - 2)^{m-1}}{(T+1)^2 (2T+1)^{m-2}} = 0.\]

To transport this result back to $M$, let $x_{k+i} = y_i$ and for $1 \leq i \leq k$, set
\[x_{k+1-i} = y_1 - 2iD \qquad x_{n-k+i} = y_N + 2iD.\]
Every pairwise distance $r$ involving one of the new points is at least $2D$, so $f_T(r) = 1$.
Repeatedly applying \autoref{lem:constant-outer-layer} shows that 
\[\pf(A_{f_T}(x)) = \pf(A_{f_T}(y)).\]
Additionally, the product associated with $M$ is the same as the product associated with $M_0$ as all new edges of the pairing contribute exactly $1$.
Since the above ratio tends to $0$ we have for $f$ positive, nonincreasing, convex and $x_1 < \ldots < x_n$
  \[
    \inf_{f, x}
    \frac{\pf(A_f(x))} {\displaystyle\prod_{\substack{\{i,j\}\in M\\i<j}} f(x_j-x_i)} = 0.
  \]
\end{proof}

\section{Further Directions}
The optimality of the rainbow matching is specific to the full class of positive, nonincreasing, convex kernels. 
For log-convex kernels, the four-point Pfaffian bounds the products associated with all three perfect matchings. 
This suggests investigating whether every perfect-matching product is bounded above by the Pfaffian in arbitrary even order.
The four-point case follows directly from log-convexity, monotonicity, and the rainbow bound.
\begin{rmk}
Let $s = x_2 - x_1$, $t = x_3 - x_2$, and $r = x_4 - x_3$.
Log-convexity gives 
\[
\log f(t)+\log f(s+t+r) \geq \log f(s+t)+\log f(t+r),
\]
So $A_f(x)_{14}A_f(x)_{23} \geq A_f(x)_{13} A_f(x)_{24}$.
So $\pf(A_f(x)) \geq A_f(x)_{12}A_f(x)_{34}$, the rainbow bound gives $\pf(A_f(x)) \geq A_f(x)_{14}A_f(x)_{23}$ since log-convex implies convex, and monotonicity gives \[\pf(A_f(x)) \geq A_f(x)_{12}A_f(x)_{34} \geq A_f(x)_{13}A_f(x)_{24}.\]
Hence $\pf(A_f(x))$ exceeds all three matching bounds.
\end{rmk}

\begin{conj}
Let $f:(0,\infty)\to(0,\infty)$ be positive, nonincreasing, and log-convex, and $x_1 < x_2 < \ldots < x_n$ with $n$ even, then for every perfect matching $M$
  \[ \pf(A_f(x)) \geq \prod_{\substack{\{i,j\} \in M \\ i < j}} f(x_j-x_i).\]
\end{conj}

Since for $i<j<k<l$, monotonicity gives
\[A_f(x)_{ij}A_f(x)_{kl}\geq A_f(x)_{ik}A_f(x)_{jl},\]
 uncrossing edges only increases the bound.
Each uncrossing strictly decreases the total length of the matching edges. 
Repeated uncrossing eventually produces a noncrossing matching without decreasing its product.
Therefore repeated uncrossing shows that it suffices to establish the conjecture for noncrossing perfect matchings, which are enumerated by Catalan numbers.

\section*{Acknowledgments}
AI systems (Claude Opus 5.5 and GPT-6 via ChatGPT) were used extensively in the development of this work, including the generation of the original mathematical proofs and the Lean 4 formalization. The author subsequently checked and revised the mathematical arguments, reviewed the definitions and theorem statements of the formalization for correspondence with the intended results, and wrote the manuscript. The Lean proofs are machine-checked in Lean 4. The author takes full responsibility for the correctness of the mathematical results and the contents of this paper.

This research received no external funding.

\section*{Code Availability}

Part of this paper has been formalized in the Lean~4 proof assistant~\cite{lean4}, using Mathlib~\cite{mathlib}.
The formalization is available in the following repository~\cite{pfaffianPositivityLean}:
\begin{center}
\url{https://github.com/wlpjohnson-math/convex-pfaffian-positivity-lean}
\end{center}
The version described here is release \texttt{v1.0.0}, under the Apache License, Version~2.0, and builds with Lean~v4.34.1 and Mathlib~v4.34.1.

The formalization includes the Interval Decomposition and Twin Rows lemmas, Coefficient Positivity, the nonnegativity of interval coefficients for convex kernels, the Pfaffian Diameter Bound, and the Rainbow Bound. 
It also proves the real-valued forms of the Alternating Prefix Sums lemma and the Endpoint Inequality for $S=[n]$.
For optimality, it establishes that no other single-matching product is a universal lower bound, but not the stronger infimum-zero statement of \autoref{thm:rainbow-bound-optimal}.
The bordered Pfaffian results and the full Albouy--Moeckel corollary are not formalized.

The Lean development defines the Pfaffian by the signed perfect-matching expansion, with sign $(-1)^{\crossings(M)}$, and uses families indexed by $\mathbb N$ restricted to finite sets. 
Only entries above the diagonal contribute. 
The repository's \texttt{docs/PaperCorrespondence.md} identifies the Lean theorems corresponding to the paper's results and explains the differences in conventions.
The file \texttt{PfaffianLean/Spec.lean} provides reviewed statements and guards for the definitions and axioms used, allowing the formalized assertions to be inspected without reading their proofs.

\bibliographystyle{amsalpha}
\bibliography{references}

\end{document}